\documentclass[12pt,leqno,amsfonts,amscd]{amsart}
\setlength{\textwidth}{14cm}

\usepackage{epsfig}
\usepackage{amsmath}
\usepackage{amsfonts}
\usepackage{amssymb}
\usepackage{color}
\usepackage{hyperref}

\newtheorem{theorem}{Theorem}[section]
\newtheorem{lemma}[theorem]{Lemma}
\newtheorem{prop}[theorem]{Proposition}
\newtheorem{cor}[theorem]{Corollary}

\theoremstyle{definition}
\newtheorem{definition}[theorem]{Definition}

\newtheorem{question}[theorem]{Question}

\theoremstyle{remark}

\newtheorem{remark}[theorem]{Remark}
\newtheorem{remarks}[theorem]{Remarks}
\numberwithin{equation}{section}

\newcommand{\NN}{{\mathbb N}}

\newcommand{\RR}{{\mathbb R}}
\newcommand{\CC}{{\mathbb C}}

\newcommand{\out}[1]{\ }

\DeclareMathOperator{\MPSH}{MPSH}

\DeclareMathOperator{\QB}{QB}

\let\PSH=\psh

\let\cal=\mathcal
\renewcommand{\phi}{\varphi}

\hyphenation{pluri-polar} \hyphenation{pluri-sub-harmonic}

\begin{document}

\title[A note on maximal plurifinely plurisubharmonic functions]{A note on maximal plurifinely plurisubharmonic functions}



\author[M. El Kadiri]{Mohamed El Kadiri}
\address{R\'esidence Hasnae B1, Appt 14, rue chorafa,
Sal\'e-Tabriquet, Morocco}
\email{elkadiri@fsr.ac.ma}


\subjclass[2010]{32U05, 32U15, 31C10, 31C40.}

\keywords
{Plurisubharmonic function;
Maximal plurisubharmonic function; Plurifine topolgy; Plurifinely subharmonic function;
Maximal plurifinely subharmonic function.}

\begin{abstract} In this note we study the plurifinely locally maximal
plurifinely plurisubharmonic functions and improve some known results on these functions. We prove in particular
that any locally bounded plurifinely locally maximal plurifinely plurisubharmonic function
is maximal, and that any finite plurisubharmonic function on an open subset of $\CC^n$
is maximal if and only if it is plurifinely locally maximal.
\end{abstract}
\maketitle

\section{Introduction}

The plurifine topology $\cal F$ on a Euclidean open set $\Omega\subset \CC^n$
is the smallest
topology that makes all plurisubharmonic functions on $\Omega$ continuous. This
construction is completely analogous to the better known fine topology in
classical potential theory of H. Cartan. Good references for the latter are
\cite{{AG}, {D}}. The topology $\cal F$ was introduced in \cite{F6}, and studied e.g. by Bedford and
Taylor in \cite{BT}, and by El Marzguioui and Wiegerinck in \cite{{E-W1},{EW1}}, where they
proved in particular that this topology is locally connected. Notions related to
the topology $\cal F$ are provided with the prefix $\cal F$, e.g. an $\cal F$-domain is an $\cal F$-open
set that is connected in $\cal F$.

Just as one introduces finely subharmonic functions on fine domains in $\RR^n$,
cf. Fuglede’s book \cite{F1}, one can introduce plurifinely plurisubharmonic functions
on $\cal F$-domains in $\CC^n$. These functions are called $\cal F$-plurisubharmonic
(or $\cal F$-psh in abbreviated form).
In case $n = 1$, we merely recover the finely subharmonic functions on fine
domains in $\RR^2$.

The definition of $\cal F$-plurisubharmonic functions on an $\cal F$-open set of $\CC^n$ was
first given in \cite{EK} and \cite{EW1}, where some properties of these functions were studied.
The $\cal F$-continuity of the $\cal F$-plurisubharmonic functions was established in \cite{EW2}.

In \cite{EFW}, the most important properties of the $\cal F$-plurisubharmonic functions
were obtained. This paper included a convergence theorem, and the characterization
of $\cal F$-plurisubharmonic functions as $\cal F$-locally bounded finely subharmonic
functions with the property that they remain finely subharmonic under
composition with $\CC$-isomorphisms of $\CC^n$. Hence the most important properties
of plurisubharmonic functions on Euclidean opens of $\CC^n$ were extended to
$\cal F$-plurisubharmonic functions on $\cal F$-open sets of $\CC^n$.

In \cite{EW}, the authors obtained a local approximation of $\cal F$-plurisubharmonic
functions by plurisubharmonic functions, outside a pluripolar set. They also
defined the Monge-Amp\`ere measure for finite $\cal F$-plurisubharmonic functions on
an $\cal F$-domain $U$. This construction was based on the fact (established in \cite{EW2})
that such a function can be $\cal F$-locally represented as a difference between two
bounded plurisubharmonic functions defined on some Euclidean open set, and
the quasi-Lindel\"{o}f property of the plurifine topology. The local approximation
property allowed them to prove that this Monge-Amp\`ere measure is a positive
Borel measure on $U$ which is $\cal F$-locally finite and does not charge the pluripolar
sets. It is $\sigma$-finite by the quasi-Lindel\"{o}f property of the plurifine topology.

In the theory of plurisubharmonic functions on a Euclidean domain, the so-called
maximal functions are the analog of the harmonic functions in classical
potential theory. They play an important role in the resolution of the Dirichlet
problem for the Monge-Amp\`ere operator.
In \cite{EK-MS}, El Kadiri and Smit introduced
the notion $\cal F$-maximal $\cal F$-plurisubharmonic
functions,
extending the notion of maximal plurisubharmonic functions on a Euclidean domain to an $\cal F$-domain,
and they studied
some basic properties of these functions.
They also studied the connection between $\cal F$-maximality of functions and the Monge-Amp\`ere operator.
In particular, they
proved that a finite $\cal F$-plurisubharmonic function $u$ on an $\cal F$-domain $U$
satisfies $(dd^cu)^n = 0$ if and only if $u$ is $\cal F$-locally $\cal F$-maximal outside some
pluripolar set.

The problem to know whether an $\cal F$-locally $\cal F$-maximal $\cal F$-plurisubharmonic function
is $\cal F$-maximal was settled in \cite{EK-MS}. In \cite{Ho} and \cite{Ho2},
it is proved that any
continuous or bounded $\cal F$-locally $\cal F$-maximal $\cal F$-plurisubharmonic function
is $\cal F$-maximal.
In the present note we shall prove that, more generally,
any locally bounded $\cal F$-locally $\cal F$-maximal $\cal F$-plurisubharmonic function
is $\cal F$-maximal. As a consequence of this result we prove that any finite $\cal F$-plurisubharmonic
$u$ function on an $\cal F$-open subset of $\CC^n$ is $\cal F$-locally
maximal if  and only if $(dd^cu)^n=0$, and that any finite $\cal F$-locally maximal
function on a Euclidean open subset  of $\CC^n$ is
maximal, improving some results in \cite{EK-MS}.
The last result cannot be improved as it is shown by the Example
4.19 in \cite{EK-MS}.

\section{Preliminaries}

Some elements of (resp. plurifine) pluripotential theory that will be used throughout the
paper can be found in \cite{BT,W}. The plurifine topology $\cal F$ on a Euclidean open set  of $\CC^n$ is
the smallest topology that makes all plurisubharmonic functions on it continuous.

For this article let $n$ be an integer $\ge 1$. If $A \subset \CC^n$, we denote the closure of
$A$ in the Euclidean, respectively plurifine topologies by $\overline A$ and ${\overline A}^{\cal F}$.
The Euclidean  and the $\cal F$ boundaries of $A$ are denoted by $\partial A$ and $\partial_{\cal F}A$
respectively. For a function $f$ on $A$ with values in $\overline \RR$, we denote by $\limsup_{x\in A,x\to y} f(x)$,
 and $\cal F$-$\limsup_{x\in A,x\to y} f(x)$ the $\limsup$ with respect to
the Euclidean topology of $\CC^n$  and the plurifine
topology of $\CC^n$
respectively, and likewise for other limits.

\begin{definition} Let $U$ be an $\cal F$ open set in $\CC^n$. A function
$u : U\longrightarrow  [-\infty, +\infty)$ is said to be
an $\cal F$-plurisubharmonic function if it is $\cal F$-upper semicontinuous, and for every complex
line $l$ in $\CC^n$, the restriction of $u$ to any $\cal F$ component of the finely open subset $l\cap U$  of $l$ is
either finely subharmonic or $= -\infty$.
\end{definition}

The set of all $\cal F$-plurisubharmonic functions in  $U$ is denoted by $\cal F$-$\PSH(U)$.

\begin{definition} Let  $U$ be an $\cal F$-open set in $\CC^n$ and let $\QB(U)$ be the trace of $\QB(\CC^n)$ on
$U$, where $\QB(\CC^n)$ denotes the $\sigma$-algebra on $\CC^n$ generated by the Borel sets and the
pluripolar subsets of $\CC^n$. Assume that $u \in \cal F$-$\PSH (U)$ is finite. Using the quasi-Lindel\"{o}f 
property of the plurifine topology and \cite[Theorem 2.17]{EW}, there exist a pluripolar set $E\subset U$,
a sequence of $\cal F$-open subsets $\{O_j\}$ and bounded plurisubharmonic functions $f_j , g_j$
defined on Euclidean open neighborhoods of $\overline{O_j}$ such that $U = E\cup\bigcup_{j=1}^\infty O_j$ and $u = f_j - g_j$ on
$O_j$. We define $O_0 := \emptyset$ and

\begin{equation}
\int_A(dd^cu)^n := \sum_{j=1}^\infty\int_{A\cap (Oj\setminus\bigcup_{k=0}^{j-1}O_k)} (dd^c(f_j - g_j ))^n, \ A \in \QB(U).
\end{equation}

By \cite[Theorem 3.6]{EW}, the measure defined by Eq. (2.1) is independent of $E$, $\{O_j\}$,
$\{f_j \}$ and $\{g_j\}$. This measure is called the complex Monge-Amp\`ere measure for the
$\cal F$-plurisubharmonic function $u$.
\end{definition}

\begin{theorem}[{\cite[Theorem 4.6]{EW}}] Let  $U$ be an $\cal F$-open set in $\CC^n$ and let $u\in \cal F$-$\PSH (U)$ be finite. Then
$(dd^cu)^n$ is a nonnegative $\cal F$-locally finite  measure on $\QB(U)$ (cf. \cite[Remark 4.7]{EW}).
\end{theorem}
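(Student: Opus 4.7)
The plan is to verify three properties of the set function defined by formula (2.1): independence of the defining data, $\cal F$-local finiteness, and nonnegativity. Independence of the exceptional pluripolar set $E$, the $\cal F$-open covering $\{O_j\}$, and the representatives $f_j,g_j$ has already been cited in Definition 2.2 as \cite[Theorem 3.6]{EW}, so I would treat it as a black box and concentrate on the remaining two properties.

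For $\cal F$-local finiteness, fix $z\in U$. By the independence just invoked one may arrange the covering so that $z\in O_{j_0}$ for some index $j_0$; then $O_{j_0}$ is an $\cal F$-neighborhood of $z$ on which the series in (2.1) reduces to a sum of finitely many terms of the form $(dd^c(f_j-g_j))^n$ restricted to $O_j\cap O_{j_0}\setminus\bigcup_{k<j}O_k$, for $j\le j_0$. Expanding multilinearly, each such term is a finite linear combination of mixed currents $(dd^cf_j)^p\wedge(dd^cg_j)^{n-p}$, and the Chern--Levine--Nirenberg inequalities applied on a Euclidean neighborhood of $\overline{O_j}$, where $f_j$ and $g_j$ are bounded plurisubharmonic, show that each mixed current has finite total mass there. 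Consequently the total variation of the measure on $O_{j_0}$ is finite.

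The main difficulty is nonnegativity. A priori each summand $(dd^c(f_j-g_j))^n$ is only a signed (in fact complex) measure on the Euclidean neighborhood on which it is defined, and one must prove that its restriction to $O_j$ is in fact nonnegative. The key tool is the local approximation result of \cite{EW}: in an $\cal F$-open neighborhood of any chosen point of $O_j$, the finite $\cal F$-plurisubharmonic function $u=f_j-g_j$ is, outside a pluripolar set, the $\cal F$-limit of a decreasing sequence of ordinary plurisubharmonic functions $u_m$ defined on genuine Euclidean open sets. For such $u_m$ classical Bedford--Taylor theory gives $(dd^cu_m)^n\ge 0$, and the Bedford--Taylor convergence theorem for decreasing sequences of locally bounded plurisubharmonic functions provides weak convergence of $(dd^cu_m)^n$ to $(dd^c(f_j-g_j))^n$ off the exceptional pluripolar set. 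Since locally bounded Monge--Amp\`ere masses do not charge pluripolar sets, the limit measure is nonnegative throughout $O_j$.

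Finally, $\sigma$-additivity follows directly from the disjoint decomposition in (2.1): the sets $O_j\setminus\bigcup_{k<j}O_k$ form a $\QB$-partition of $U\setminus E$, each carrying a nonnegative finite measure, while the pluripolar remainder $E$ is charged by none of the bounded Monge--Amp\`ere contributions. The main obstacle, as indicated, is the nonnegativity step, since without the $\cal F$-local approximation from \cite{EW} there is no direct way to see that the signed Bedford--Taylor expansion of $(dd^c(f_j-g_j))^n$ collapses to a positive measure after restriction to the $\cal F$-open set $O_j$.
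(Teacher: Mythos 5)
This statement is quoted in the paper as a known result (\cite[Theorem 4.6]{EW}) and is given no proof here, so there is no in-paper argument to compare yours against; the relevant proof lives in the cited reference. That said, your outline reconstructs the strategy of that reference essentially correctly: well-definedness is delegated to \cite[Theorem 3.6]{EW}; local finiteness follows from the Chern--Levine--Nirenberg estimates applied to the mixed currents $(dd^cf_j)^p\wedge(dd^cg_j)^{n-p}$ of the bounded plurisubharmonic functions $f_j,g_j$ on a Euclidean neighbourhood of $\overline{O_j}$ (and your observation that only the terms with $j\le j_0$ can meet $O_{j_0}$ is correct, since $O_{j_0}$ is subtracted from every later piece of the partition); and positivity comes from the $\cal F$-local approximation of $u$ by decreasing sequences of ordinary plurisubharmonic functions together with the Bedford--Taylor convergence theorem and the fact that Monge--Amp\`ere measures of locally bounded plurisubharmonic functions do not charge pluripolar sets.

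The one point where your sketch is thinner than the actual argument is the identification step in the positivity proof. The decreasing sequence $u_m$ converges to $u=f_j-g_j$ only quasi-everywhere on the $\cal F$-open set, not on the ambient Euclidean open set, so the Bedford--Taylor convergence theorem only tells you that $(dd^cu_m)^n$ converges weakly to $(dd^cv)^n$, where $v=\lim_m u_m$ on the Euclidean neighbourhood. To conclude that this nonnegative limit measure, restricted to $O_j$, coincides with $(dd^c(f_j-g_j))^n$ restricted to $O_j$, you need the Bedford--Taylor localization principle relative to the plurifine topology: two locally bounded plurisubharmonic functions that agree on an $\cal F$-open set have equal Monge--Amp\`ere measures on that set. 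This is the same lemma that underlies the well-definedness you took as a black box, so it is available, but it must be invoked explicitly; the phrase ``weak convergence off the exceptional pluripolar set'' does not by itself yield the required equality of measures on $O_j$.
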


\section{Maximal  and $\cal F$-locally maximal
${\cal F}$-plurisubharmonic functions}

In analogy with maximal plurisubharmonic functions, which play a role in
pluripotential theory comparable to that of harmonic functions in classical
potential theory, we will introduce $\cal F$-maximal $\cal F$-plurisubharmonic functions.
These relate similarly to finely harmonic functions and constitute the plurifine
analog of maximal plurisubharmonic functions on Euclidean open sets.

\begin{definition}\label{def2.1.} Let $U \subset \CC^n$
be an $\cal F$-open set and let $u \in {\cal F}$-$\PSH(U)$. We
say that $u$ is $\cal F$-maximal if for every bounded $\cal F$-open set $G$ of $\CC^n$
such that
${\overline G}\subset  U$, and for every function $v \in {\cal F}$-$\PSH(G)$ that is bounded from above on
$G$ and extends $\cal F$-upper semicontinuously to ${\overline G}^{\cal F}$, the following holds:
$v \le u \text{ on } \partial_{\cal F}G \Rightarrow v \le u \text{ on } G.$
\end{definition}
We denote by ${\cal F}$-$\MPSH(U)$ the set of $\cal F$-maximal $\cal F$-plurisubharmonic functions
on $U$.

\begin{theorem}[{\cite[Theorem 4.8]{EK-MS}}] Let  $U$ be an $\cal F$-open set in $\CC^n$ and let $u$ be a finite $\cal F$-maximal
$\cal F$-plurisubharmonic function in $U$. Then $(dd^cu)^n = 0$ in $\QB(U)$.
\end{theorem}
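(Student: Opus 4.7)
The plan is to localize using the construction of the Monge--Amp\`ere measure in Definition 2.2 and then, on each local piece, to extract the vanishing of the measure from $\cal F$-maximality by means of a Perron-type upper envelope, mirroring the classical Bedford--Taylor proof. Concretely, Definition 2.2 identifies $(dd^cu)^n$ on $O_j\setminus\bigcup_{k<j}O_k$ with the classical Bedford--Taylor measure $(dd^c(f_j-g_j))^n$, where $u=f_j-g_j$ on the $\cal F$-open set $O_j$ and $f_j,g_j$ are bounded plurisubharmonic on a Euclidean neighborhood of $\overline{O_j}$, and by Theorem 2.3 this measure does not charge pluripolar sets. It therefore suffices to prove $(dd^cu)^n(G)=0$ for every bounded $\cal F$-open $G$ with $\overline{G}^{\cal F}\subset O_j$; on such a $G$, the function $u$ is itself bounded.

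Fix such a $G$ and consider the Perron-type upper envelope
\[
v(z) := \sup\{w(z) : w\in\FPSH(G) \text{ bounded above},\ \flimsup_{G\ni\zeta\to y}w(\zeta)\le u(y)\ \forall y\in\partial_{\cal F}G\}.
\]
Since $u|_G$ competes in this supremum, $v\ge u$ on $G$. The $\cal F$-upper semicontinuous regularization $v^*$ is $\cal F$-plurisubharmonic, bounded above, and --- using barriers built from $f_j,g_j$ --- extends $\cal F$-upper semicontinuously to $\overline{G}^{\cal F}$ with $v^*\le u$ on $\partial_{\cal F}G$. Hence $v^*$ is an admissible test function in Definition 3.1 for the $\cal F$-maximal $u$, so $v^*\le u$ on $G$; combined with $v^*\ge v\ge u$ this gives $v^*=u$ on $G$.

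The decisive step is to establish that this Perron envelope satisfies the homogeneous equation $(dd^cv^*)^n=0$ on $G$, the $\cal F$-analog of the classical Bedford--Taylor theorem on upper envelopes. Once known, $v^*=u$ yields $(dd^cu)^n(G)=0$; varying $G$ and $j$ (via the quasi-Lindel\"of property together with the fact that pluripolar sets carry no mass) then produces $(dd^cu)^n=0$ on $\QB(U)$. This is the main obstacle: classically $(dd^cv^*)^n=0$ is proved by local balayage on small Euclidean balls, whereas here $G$ and $\partial_{\cal F}G$ need not possess any Euclidean regularity. The remedy --- developed in \cite{EK-MS} --- is to use the local decomposition $u=f_j-g_j$ to balaye $f_j$ and $g_j$ separately on $\cal F$-open subsets of $O_j$, where the classical Bedford--Taylor machinery applies, then patch the local conclusions by the quasi-Lindel\"of property and Theorem 2.3.
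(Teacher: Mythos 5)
Note first that the paper you are working from does not prove this statement at all: it is quoted verbatim from \cite[Theorem 4.8]{EK-MS} as a known result, so there is no internal proof to compare against and your attempt must stand on its own. Its overall shape (localize via the construction of $(dd^cu)^n$ in Definition 2.2, build a Perron envelope on a small bounded $\cal F$-open $G$, use $\cal F$-maximality to force $u$ to coincide with the envelope, and conclude because the envelope annihilates the Monge--Amp\`ere operator) is the natural transplant of the classical Bedford--Taylor argument, but it has a genuine gap at exactly the point you yourself flag as ``the decisive step'': you never prove that $(dd^cv^*)^n=0$ on $G$. For an arbitrary bounded $\cal F$-open set there is no off-the-shelf analogue of the Bedford--Taylor theorem on upper envelopes --- the classical proof proceeds by balayage on Euclidean balls compactly contained in the domain, and an $\cal F$-open $G$ contains no such balls; the closing sentence about using $u=f_j-g_j$ to ``balaye $f_j$ and $g_j$ separately \dots then patch'' does not describe an actual argument, and citing \cite{EK-MS} for this step is circular, since that is precisely the paper whose theorem you are trying to prove. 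This one step carries essentially the entire content of the theorem.

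There is a second, independent gap in the admissibility of $v^*$ as a test function for Definition \ref{def2.1.}. You need $v^*$ to extend $\cal F$-upper semicontinuously to $\overline{G}^{\cal F}$ with $v^*\le u$ on $\partial_{\cal F}G$. Each individual competitor $w$ satisfies the boundary inequality, but the $\cal F$-upper semicontinuous regularization of their supremum can exceed $u$ at irregular points of $\partial_{\cal F}G$; in the classical setting this is ruled out by the regularity of the ball and the continuity of the boundary data, neither of which is available here, where $u$ is merely finite and $G$ is an arbitrary bounded $\cal F$-open set. The phrase ``barriers built from $f_j,g_j$'' is not a substitute for this argument. A smaller, repairable point: Definition \ref{def2.1.} requires the Euclidean closure $\overline{G}\subset U$, whereas you only arrange $\overline{G}^{\cal F}\subset O_j$; one should instead take $G$ from a base of $\cal F$-open sets with compact Euclidean closure contained in $O_j$ (such a base exists by the Bedford--Taylor description of the plurifine topology), since otherwise the maximality hypothesis cannot even be invoked.
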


\begin{definition}\label{def3.12}  An ${\cal F}$-psh function $u$ in
an $\cal F$-open set $U$ of $\CC^n$ is said to be ${\cal F}$-locally maximal if each point
of $U$ has an ${\cal F}$-open neighborhood  $V$ such that
the restriction of  $u$ to $V$ is ${\cal F}$-maximal.
\end{definition}

It is clear that an $\cal F$- maximal $\cal F$-psh in an $\cal F$ domain $U$
of $\CC^n$ is ${\cal F}$-locally maximal
in $U$. The question to know whether the converse of this result is true was raised in
\cite{EK-MS}. A partial answer to this question is given by the following:

\begin{prop}[{\cite[Proposition 3.2]{Ho}}]\label{prop3.4}
Let $U$ be an $\cal F$-open set in $\CC^n$ and  $u\in {\cal F}$-$\PSH(U)$. If $u$ is a bounded  $\cal F$-locally maximal function
on $U$, then $u$ is $\cal F$-maximal on $U$.
\end{prop}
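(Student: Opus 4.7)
The strategy is to argue by contradiction: paste a natural auxiliary function that would strictly exceed $u$ at some point, then use the $\cal F$-continuity of finite $\cal F$-psh functions together with $\cal F$-local $\cal F$-maximality to rule this out.

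Fix a bounded $\cal F$-open set $G$ with $\overline G \subset U$ and a test function $v \in \cal F\text{-}\PSH(G)$, bounded above on $G$, extending $\cal F$-u.s.c.\ to $\overline G^{\cal F}$, and satisfying $v \le u$ on $\partial_{\cal F} G$. Define
\[
w(z) := \begin{cases} \max(u(z), v(z)), & z \in G,\\ u(z), & z \in U\setminus G. \end{cases}
\]
The boundary inequality $v \le u$ on $\partial_{\cal F} G$, combined with the $\cal F$-u.s.c.\ extension of $v$, makes $w$ $\cal F$-u.s.c.\ at each point of $\partial_{\cal F} G \cap U$; restricting $w$ to a complex line produces a pasting of two finely subharmonic functions that remains finely subharmonic, so $w \in \cal F\text{-}\PSH(U)$. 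Since $u$ is bounded and $v$ is bounded above, $w$ is bounded, with $w \ge u$ on $U$ and $w = u$ on $U \setminus G$. It thus suffices to prove $w = u$ on $G$.

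Suppose for contradiction that $w(z_0) > u(z_0)$ for some $z_0 \in G$, and put $\delta := \tfrac12 (w(z_0) - u(z_0)) > 0$. By the $\cal F$-continuity of finite $\cal F$-psh functions (\cite{EW2}), both $u$ and $w$ are $\cal F$-continuous on $U$, hence so is $w - u$. Using $\cal F$-local $\cal F$-maximality and the fact that $\cal F$-maximality is inherited by $\cal F$-open subsets, choose a bounded $\cal F$-open neighborhood $V$ of $z_0$ with $\overline V \subset U$ on which $u|_V$ is $\cal F$-maximal. Set
\[
W := \{ z \in V : w(z) - u(z) > \delta \}.
\]
This is a bounded $\cal F$-open subset of $V$ containing $z_0$, with $\overline W \subset V$, and by $\cal F$-continuity of $w - u$ one has $w - u \le \delta$ on $\partial_{\cal F} W$. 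Consequently $w - \delta$ is a legitimate test function for the $\cal F$-maximality of $u$ on $V$: it is bounded and $\cal F$-psh on $W$, extends $\cal F$-u.s.c.\ to $\overline W^{\cal F}$, and satisfies $w - \delta \le u$ on $\partial_{\cal F} W$. Maximality on $V$ now yields $w - \delta \le u$ on $W$, contradicting $z_0 \in W$ with $w(z_0) - \delta = \tfrac12(w(z_0) + u(z_0)) > u(z_0)$.

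The main obstacle is the shrinking step producing a bounded $\cal F$-maximal neighborhood $V$ with $\overline V \subset U$ and ensuring $\overline W \subset V$ in the sense demanded by Definition \ref{def2.1.}; this is where the interplay between the Euclidean and plurifine topologies must be reconciled, and it is exactly the point at which boundedness of $u$ enters twice---once to force $w$ itself to be bounded (so that $w - \delta$ is a legitimate bounded test function), and once to make $w - u$ finite and $\cal F$-continuous, legitimising the level-set definition of $W$. The promised extension to locally bounded $u$, as in the main theorem of the paper, will require replacing this global pasting by a genuinely $\cal F$-local construction.
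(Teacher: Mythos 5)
The paper does not actually prove this proposition: it is quoted from \cite[Proposition 3.2]{Ho}, whose argument runs through the Monge--Amp\`ere operator (boundedness plus ${\cal F}$-local maximality force $(dd^cu)^n=0$ ${\cal F}$-locally, hence globally, and a comparison/domination principle for bounded ${\cal F}$-psh functions then yields ${\cal F}$-maximality). Your attempt is instead a direct pasting argument, which would be more elementary if it worked, but it contains a genuine gap --- one you flag yourself in your closing paragraph without closing it, which means the statement has not been proved.

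The gap is the localization step. Definition \ref{def2.1.} only allows you to test the ${\cal F}$-maximality of $u|_V$ against bounded ${\cal F}$-open sets $W$ whose \emph{Euclidean} closure satisfies $\overline W\subset V$. Your $W=\{z\in V: w(z)-u(z)>\delta\}$ is indeed an ${\cal F}$-open subset of $V$, but there is no reason for $\overline W\subset V$: the neighborhood $V$ supplied by ${\cal F}$-local maximality is merely ${\cal F}$-open, need not contain any Euclidean ball around $z_0$, and points of $W$ can accumulate in the Euclidean topology on points outside $V$. The ${\cal F}$-continuity of $w-u$ gives no control over such Euclidean accumulation. For the same reason the asserted bound $w-u\le\delta$ on $\partial_{\cal F}W$ is only justified on $\partial_{\cal F}W\cap V$; at ${\cal F}$-boundary points of $W$ lying outside $V$ you know nothing about $w-u$. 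Nor can you repair this by intersecting $W$ with a small ball centered at $z_0$, precisely because $V$ may have empty Euclidean interior. This is not a removable technicality: it is exactly the local-to-global obstruction that made the question open in \cite{EK-MS}, and it is why the known proofs go through $(dd^cu)^n$ rather than by direct pasting. (The construction of the glued function $w$ itself is essentially correct, modulo an explicit appeal to the gluing lemma for ${\cal F}$-psh functions, and the observation that ${\cal F}$-maximality restricts to ${\cal F}$-open subsets is also correct; the proof fails only, but fatally, at the step above.)
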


The following result is more general than the main theorems of \cite{Ho} and \cite{Ho2}:
\begin{theorem}\label{thm2.4}
Let $U$ be an $\cal F$-open set in $\CC^n$ and $u\in {\cal F}$-$\PSH(U)$.
If $u$ is locally bounded (relatively to the Euclidean topology)
$\cal F$-locally maximal function
on $U$, then $u$ is $\cal F$-maximal on $U$.
\end{theorem}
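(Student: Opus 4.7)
The plan is to reduce the statement to Proposition \ref{prop3.4}, which already handles the case where $u$ is globally bounded on $U$. The crucial observation is that the $\mathcal{F}$-maximality condition in Definition \ref{def2.1.} is tested only against bounded $\mathcal{F}$-open sets $G$ whose Euclidean closure $\overline G$ lies in $U$, so $\overline G$ is always Euclidean-compact. For such a test set the local boundedness hypothesis on $u$ automatically yields boundedness of $u$ on a suitable $\mathcal{F}$-open neighborhood of $\overline G$ inside $U$, and then Proposition \ref{prop3.4} can be applied to that neighborhood.

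Concretely, I would fix a bounded $\mathcal{F}$-open set $G\subset\CC^n$ with $\overline G\subset U$ and a competitor $v\in\mathcal{F}$-$\PSH(G)$ as in Definition \ref{def2.1.}: $v$ is bounded above on $G$, extends $\mathcal{F}$-upper semicontinuously to $\overline G^{\mathcal{F}}$, and $v\le u$ on $\partial_{\mathcal{F}}G$; the goal is to show $v\le u$ on $G$. For each $p\in\overline G\subset U$, Euclidean local boundedness of $u$ supplies a Euclidean open ball $B_p$ around $p$ with $u$ bounded on $B_p\cap U$. Euclidean compactness of $\overline G$ produces a finite subcover $B_{p_1},\dots,B_{p_k}$, and I would set
\[
W:=\bigl(B_{p_1}\cup\cdots\cup B_{p_k}\bigr)\cap U.
\]
Then $W$ is $\mathcal{F}$-open as the intersection of a Euclidean open set with an $\mathcal{F}$-open set, it satisfies $\overline G\subset W\subset U$, and $u$ is bounded on $W$.

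Next, since $\mathcal{F}$-local maximality of $u$ on $U$ automatically passes to the $\mathcal{F}$-open subset $W$ (any bounded $\mathcal{F}$-open test set whose Euclidean closure lies in $W$ also has its closure in $U$), the restriction $u|_W$ is a bounded, $\mathcal{F}$-locally maximal $\mathcal{F}$-plurisubharmonic function on $W$. Proposition \ref{prop3.4} therefore gives that $u|_W$ is $\mathcal{F}$-maximal on $W$. Applying this $\mathcal{F}$-maximality to the admissible pair $(G,v)$---admissible because $G$ is bounded $\mathcal{F}$-open with $\overline G\subset W$---immediately yields $v\le u$ on $G$, as required.

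I do not anticipate a serious analytic obstacle: the substantive pluripotential-theoretic work is packaged inside Proposition \ref{prop3.4}, and the rest is a routine Euclidean covering argument. The one slightly delicate point, worth stating carefully, is that $U$ itself need not be Euclidean open, so one cannot in general sandwich $\overline G$ between a Euclidean open set and $U$; this is bypassed by working with the $\mathcal{F}$-open intersection $W=B\cap U$ rather than with a Euclidean open neighborhood of $\overline G$.
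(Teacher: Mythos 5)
Your proof is correct and follows essentially the same route as the paper: both reduce to Proposition \ref{prop3.4} by producing an $\cal F$-open set containing $\overline G$ on which $u$ is bounded, the paper via a global Lindel\"{o}f exhaustion $U\subset\bigcup_j V_j$ with $u$ bounded on each $V_j\cap U$, and you via a finite ball cover of the compact set $\overline G$ chosen per test pair $(G,v)$. The implicit step that $\cal F$-local maximality on $U$ passes to the $\cal F$-open subset on which Proposition \ref{prop3.4} is applied is treated with exactly the same (tacit) level of detail in the paper's own proof, so there is nothing to add.
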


\begin{proof}
Suppose that $u$ is a locally bounded  $\cal F$-locally maximal function
in $U$. By the Lindel\"{o}f property  there is an increasing sequence $(V_j)$ of Euclidean open sets such that
$U\subset \bigcup_jV_j$ and $u$ is bounded on each $\cal F$-open set $V_j\cap U$.
Since for any $j$ the function $u$ is $\cal F$-locally maximal
and bounded on $V_j\cap U$, it
follows  from   Proposition \ref{prop3.4} that the function $u$ is $\cal F$-maximal on each $V_j\cap U$.
Now let $G$ be a bounded $\cal F$-open set of $\CC^n$ such that $\overline G\subset U$, then there exists $j\in \NN$ such that
$\overline G\subset V_j\cap U$. Let $v$ be an $\cal F$-upper semicontinuous function on
$\overline G^{\cal F}$ that is bounded from above and such that $v\in {\cal F}$-$\PSH(G)$ and $v\le u$ on
$\partial_{\cal F}G$, then
we have $v\le u$ on $G$ because $u$ is $\cal F$-maximal on $U\cap V_j$.
It follows that $u$ is $\cal F$-maximal.
\end{proof}

As corollary we obtain the following result of Hong and Viet \cite[Main Theorem]{Ho}:

\begin{cor}
Let $U$ be an $\cal F$-open set in $\CC^n$ and $u\in {\cal F}$-$\PSH(U)$. If $u$ is a continuous $\cal F$-locally maximal function
on $U$, then $u$ is $\cal F$-maximal on $U$.
\end{cor}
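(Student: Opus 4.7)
The plan is to derive this as an immediate consequence of Theorem \ref{thm2.4}. The only content the corollary adds beyond that theorem is the observation that continuous functions are locally bounded in the Euclidean sense, so the corollary is essentially a specialization of the theorem.

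More precisely, I would interpret ``continuous'' as Euclidean continuous on $U$ (viewed as a subset of $\CC^n$); note that $\mathcal{F}$-continuity alone would be automatic for any $\mathcal{F}$-psh function by the result of \cite{EW2}, so the meaningful reading is continuity with respect to the Euclidean topology inherited on $U$. Under that reading, fix $x \in U$ and apply continuity at $x$ with $\eps = 1$: there exists a Euclidean open neighborhood $W$ of $x$ in $\CC^n$ such that $|u(y) - u(x)| < 1$ for every $y \in W \cap U$. In particular $u$ is bounded on the $\mathcal{F}$-neighborhood $W \cap U$ of $x$, so $u$ is locally bounded on $U$ relative to the Euclidean topology.

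Combining this with the standing assumption that $u$ is $\mathcal{F}$-locally maximal and $\mathcal{F}$-psh on $U$, all the hypotheses of Theorem \ref{thm2.4} are satisfied, and we conclude that $u$ is $\mathcal{F}$-maximal on $U$. There is no real obstacle here: all of the analytic work sits inside Theorem \ref{thm2.4} (which itself relies on Proposition \ref{prop3.4} applied on an increasing exhaustion by Euclidean opens $V_j$), and the corollary is just the observation that Euclidean continuity is a strictly stronger hypothesis than Euclidean local boundedness.
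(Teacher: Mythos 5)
Your proof is correct and matches the paper's intent exactly: the corollary is stated as an immediate consequence of Theorem \ref{thm2.4}, with the only needed observation being that (Euclidean) continuity implies Euclidean local boundedness. Your added remark that continuity must be read in the Euclidean sense (since $\mathcal{F}$-continuity is automatic for $\mathcal{F}$-psh functions) is a sensible clarification but does not change the argument.
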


The following corollaries follow easily from Theorem \ref{thm2.4} and
\cite[Proposition 2.5, Theorem 4.1 and Theorem 4.14]{EK-MS}:

\begin{cor}\label{cor2.6}
Let $u$ be a locally bounded $\cal F$-psh function on an $\cal F$-open subset $U$ of $\CC^n$.
Then $u$ is $\cal F$-maximal if and only if $(dd^cu)^n=0$.
\end{cor}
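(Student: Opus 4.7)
The plan is to establish the two implications separately, using Theorem \ref{thm2.4} for the harder direction and reducing the easier direction to the already-quoted material.

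For the forward implication, suppose $u$ is $\cal F$-maximal. Since $u$ is locally bounded it is in particular finite on $U$, so I can apply Theorem 3.2 (i.e., [EK-MS, Theorem 4.8]) directly to conclude that $(dd^cu)^n = 0$ in $\QB(U)$. No further work is needed here.

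For the converse, assume $(dd^cu)^n = 0$. The cited [EK-MS, Theorem 4.14] asserts that a finite $\cal F$-psh function with vanishing Monge--Amp\`ere measure is $\cal F$-locally $\cal F$-maximal outside a pluripolar set $E \subset U$. I would then use [EK-MS, Proposition 2.5 and Theorem 4.1] in the locally bounded setting to absorb the exceptional set $E$: because $u$ is locally bounded, pluripolar sets are negligible for testing $\cal F$-maximality of $u$, so $u$ is in fact $\cal F$-locally $\cal F$-maximal on all of $U$. Finally, since $u$ is locally bounded and $\cal F$-locally maximal on $U$, Theorem \ref{thm2.4} of the present note applies and yields that $u$ is $\cal F$-maximal on $U$.

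The step that requires the most care is the removal of the pluripolar exceptional set $E$ in the converse direction. The local boundedness hypothesis is essential here: without it, the Monge--Amp\`ere measure is merely defined $\cal F$-locally via the decomposition $u = f_j - g_j$, and pluripolar obstructions cannot in general be removed (as illustrated by [EK-MS, Example 4.19], which the introduction cites as showing the overall result is sharp). Once the exceptional pluripolar set is absorbed, invoking Theorem \ref{thm2.4} is straightforward.
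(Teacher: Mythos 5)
Your overall architecture --- first show that $u$ is ${\cal F}$-locally ${\cal F}$-maximal, then invoke Theorem~\ref{thm2.4} --- matches the paper, and your ``only if'' direction is correct: local boundedness gives finiteness, so $(dd^cu)^n=0$ follows from the quoted \cite[Theorem 4.8]{EK-MS}. The gap is in how you produce ${\cal F}$-local maximality from $(dd^cu)^n=0$. You route the argument through the statement that $u$ is ${\cal F}$-locally ${\cal F}$-maximal outside a pluripolar set $E$ (that is \cite[Theorem 4.15]{EK-MS}, quoted here as Theorem~\ref{thm3.1}, not Theorem 4.14), and then assert that ``because $u$ is locally bounded, pluripolar sets are negligible for testing ${\cal F}$-maximality,'' so that $E$ can be absorbed. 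That assertion is precisely the nontrivial content of the step, and you give no argument for it: to get maximality on an ${\cal F}$-neighborhood of a point of $E$ one would have to compare $v+\eps w$ with $u$ for some psh $w$ equal to $-\infty$ on $E$, using that $u$ is bounded below on $\overline G$, and nothing you cite supplies this. The cautionary example \cite[Example 4.19]{EK-MS} shows that such exceptional sets cannot be removed in general, so the removal mechanism must be exhibited, not merely attributed to local boundedness.

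The intended (and shorter) route avoids the exceptional set entirely, which is why the paper cites \cite[Proposition 2.5, Theorem 4.1 and Theorem 4.14]{EK-MS} rather than Theorem 4.15. Since $u$ is locally bounded, every point of $U$ has an ${\cal F}$-open neighborhood $V\subset U$ on which $u$ is bounded; on such a $V$ the hypothesis $(dd^cu)^n=0$ together with the comparison principle for bounded ${\cal F}$-psh functions (equivalently, the bounded case of the corollary, which is exactly \cite[Theorem 4.14]{EK-MS}) shows directly that $u$ is ${\cal F}$-maximal on $V$: if $v$ is ${\cal F}$-psh and bounded above on a bounded ${\cal F}$-open $G$ with $\overline G\subset V$ and $v\le u$ on $\partial_{\cal F}G$, then $(dd^cv)^n\ge 0=(dd^cu)^n$ forces $v\le u$ on $G$. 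This yields ${\cal F}$-local maximality with no pluripolar set to remove, and Theorem~\ref{thm2.4} then finishes as you say. To repair your write-up, replace the detour through Theorem~\ref{thm3.1} by this direct application of the bounded-case result on neighborhoods where $u$ is bounded.
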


\begin{cor}
Let $u$ be a continuous $\cal F$-psh function on an $\cal F$-open subset $U$ of $\CC^n$.
Then $u$ is $\cal F$-maximal if and only if $(dd^cu)^n=0$.
\end{cor}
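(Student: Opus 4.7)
The plan is to obtain this corollary as an immediate specialization of the preceding Corollary \ref{cor2.6}. The only thing that requires verification is that a continuous $\cal F$-psh function on $U$ is locally bounded in the Euclidean sense, which is the precise hypothesis needed to invoke Corollary \ref{cor2.6}. At any point $z_0 \in U$, continuity (in the Euclidean topology, inherited as a subspace topology on $U$) furnishes a Euclidean neighborhood $V$ of $z_0$ in $\CC^n$ such that $|u(z) - u(z_0)| < 1$ for all $z \in V \cap U$. In particular $u$ is bounded on $V \cap U$, so $u$ is Euclidean-locally bounded on $U$.

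With this observation the hypotheses of Corollary \ref{cor2.6} are satisfied, and its conclusion is exactly the asserted equivalence: $u$ is $\cal F$-maximal on $U$ if and only if $(dd^c u)^n = 0$.

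There is no genuine obstacle at this step; all of the substantive work has already been carried out upstream, namely in Theorem \ref{thm2.4} (the upgrade from bounded to Euclidean-locally bounded via the Lindel\"of covering $U \subset \bigcup_j V_j$ with $u$ bounded on each $V_j \cap U$) and in Corollary \ref{cor2.6} (which couples Theorem \ref{thm2.4} with the Monge-Amp\`ere characterizations from \cite{EK-MS}). The one point worth flagging explicitly is the interpretation of the word ``continuous'': since every $\cal F$-plurisubharmonic function is automatically $\cal F$-continuous by \cite{EW2}, the adjective would be vacuous if understood in the plurifine sense, and one must read it as Euclidean continuity for the statement to add information beyond what is already contained in the definition of $\cal F$-$\PSH(U)$.
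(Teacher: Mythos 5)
Your proposal is correct and matches the paper's intent: the paper gives no separate argument for this corollary, simply stating that it (together with Corollary \ref{cor2.6}) follows easily from Theorem \ref{thm2.4} and the cited results of \cite{EK-MS}, and your reduction of Euclidean continuity to Euclidean local boundedness is exactly the observation that makes the continuous case a special case of Corollary \ref{cor2.6}. Your remark on reading ``continuous'' in the Euclidean sense (since $\cal F$-continuity is automatic by \cite{EW2}) is a sensible clarification, not a deviation.
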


\section{The case $n=1$}

For more details on finely superharmonic functions on a finely open subset of $\CC^n\cong \RR^{2n}$
we refer the reader to the monograph of Fuglede \cite{F1} on the subject.
We recall here the definition of the sweeping in a fine domain $U$ of $\CC$: let
$U$ a fine domain of $\CC$, $f:U\longrightarrow [0,+\infty]$ and $A\subset U$. Then we define
$$R_f^A:=\inf\{s\in \cal S^+(U)\cup\{+\infty\}: s\ge f \text{ on } A\}$$
and $\widehat R_f^A$ the finely lower semicontinuous regularized function of $R_f^A$ on $U$, that is the function
defined on $U$ by $\widehat R_f^A(z)$=f-$\liminf_{y\to z} R_f^A(z)$ for every $z\in U$, where
$\cal S^+(U)$ denotes the cone of nonnegative finely superharmonic functions on $U$.

A function $s\in \cal S^+(U)$ is called invariant if there exists a sequence $(V_j)$
of finely open subsets of $U$ such that $\bigcup_jV_j=U$, $\overline{V_j}\subset U$  and
$ \widehat R_u^{U\setminus V_j}=u$ for every $j$.

In \cite[Proposition 2.10]{EK-MS} we have proved that for any invariant function $u$ on
a finely open set of $\CC$, the function $-u$ is $\cal F$-locally maximal. Conversely,
we have the following:

\begin{theorem}\label{thm4.1}
Let $U$ be a regular finely open subset of $\CC$ and $u$ a nonnegative finely superharmonic function on $U$.
If $-u$ is maximal then $u$ is invariant.
\end{theorem}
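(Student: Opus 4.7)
The plan is to prove, for a suitable exhaustion $(V_j)$ of $U$, that \emph{every} candidate $s$ in the defining infimum of $R_u^{U\setminus V_j}$ already satisfies $s\ge u$ on all of $U$. This forces $R_u^{U\setminus V_j}=u$ pointwise, and then the fine lower semicontinuity of $u$ yields $\widehat R_u^{U\setminus V_j}=u$, which is exactly invariance. The approach deliberately works at the level of $R$ (not $\widehat R$) so that no polar exceptional sets ever enter the argument.

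First I would use the regularity hypothesis on $U$ together with the quasi-Lindel\"{o}f property of the plurifine topology to produce an increasing sequence of bounded $\cal F$-open sets $V_j$ with $\overline{V_j}\subset U$ (Euclidean closure) and $\bigcup_j V_j=U$. The role of the word ``regular'' in the hypothesis is precisely to guarantee that such an exhaustion, suitable for the $\cal F$-maximality definition, exists. Fix one such $V_j$ and set $A:=U\setminus V_j$.

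Now comes the key step. Let $s\in\cal S^+(U)$ be an arbitrary competitor in the definition of $R_u^A$, so $s\ge u$ on $A$. Consider $w:=-s$ on the bounded $\cal F$-open set $V_j$. Since $n=1$ and $s$ is finely superharmonic, $w$ is finely subharmonic and hence $\cal F$-plurisubharmonic on $V_j$; it is bounded above by $0$ because $s\ge 0$; it extends finely upper semicontinuously to $\overline{V_j}^{\cal F}\subset U$ because $s$ is finely lower semicontinuous on $U$; and because $\partial_{\cal F}V_j\subset A$, the assumption $s\ge u$ on $A$ gives $w\le -u$ on $\partial_{\cal F}V_j$. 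All the hypotheses of Definition \ref{def2.1.} are thus met, and the $\cal F$-maximality of $-u$ yields $w\le -u$ on $V_j$, i.e.\ $s\ge u$ on $V_j$. Taking the infimum over all admissible $s$ gives $R_u^A\ge u$ on $V_j$; combined with the trivial bound $R_u^A\le u$ (obtained with the candidate $s=u$) and the built-in identity $R_u^A=u$ on $A$, we conclude $R_u^A=u$ pointwise on $U$. Since $u$ is finely superharmonic and therefore finely lower semicontinuous, its fine lsc regularization is itself, so $\widehat R_u^{U\setminus V_j}=R_u^{U\setminus V_j}=u$ for every $j$, proving invariance.

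The only step that really needs the regularity of $U$ is the construction of the exhausting sequence $(V_j)$ with $\overline{V_j}\subset U$; after that, the argument is a direct application of the $\cal F$-maximality of $-u$ to the competitors of the fine reduction. The main conceptual obstacle one might have expected, namely the q.e.\ discrepancy between $R_u^A$ and $\widehat R_u^A$ on $A$, is sidestepped entirely by the choice to reason with $R_u^A$ throughout and only pass to $\widehat R_u^A$ at the very last line, where $u$'s own fine lsc absorbs the regularization for free.
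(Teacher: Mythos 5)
Your central mechanism is exactly the paper's: both proofs take an arbitrary competitor $s\ge u$ on $U\setminus V_j$ in the reduction $R_u^{U\setminus V_j}$, feed $-s$ into the definition of $\cal F$-maximality of $-u$ on $V_j$, and conclude $R_u^{U\setminus V_j}=u$, hence $\widehat R_u^{U\setminus V_j}=u$, for every $j$. Where you diverge is in how you finish and how you start. The paper does not stop at $\widehat R_u^{U\setminus V_j}=u$; it first invokes the Riesz decomposition $u=p+h$ ($p$ a fine potential, $h$ invariant) and chooses, via Fuglede's integral-representation results \cite[Theorem 4.4]{F4} and \cite[Corollaire, p.~132]{F5}, an exhaustion along which $\inf_j\widehat R_p^{U\setminus V_j}=0$ q.e.; the identity $\widehat R_u^{U\setminus V_j}=u$ then forces $\widehat R_p^{U\setminus V_j}=p$ for all $j$, hence $p=0$ and $u=h$. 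Your shortcut --- reading off invariance directly from the ``there exists a sequence $(V_j)$'' formulation given in Section 4 --- is legitimate if that formulation is taken as the operative definition, but the paper's detour through the potential part is the safer route if ``invariant'' is meant in Fuglede's original sense (where the reduction identity is required for a whole class of sets, not merely exhibited for one exhaustion); your proof silently commits to the weaker reading.

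The one step I would press you on is the construction of the exhaustion. The quasi-Lindel\"{o}f property only yields a countable subfamily of fine neighborhoods covering $U$ \emph{up to a polar set}, so ``regularity plus quasi-Lindel\"{o}f'' does not by itself deliver $\bigcup_j V_j=U$ exactly, and your argument genuinely needs equality (a point of $U\setminus\bigcup_j V_j$ would never be interior to any $V_j$, so maximality is never applied there). The existence of an increasing sequence of finely open $V_j$ with $\overline{V_j}\subset U$ and $\bigcup_j V_j=U$ is true, but it is a nontrivial fact of fine potential theory; the paper imports it (together with the extra properties relative to $p$ and $h$) from the two Fuglede references above, and you should cite something of that kind rather than derive it from quasi-Lindel\"{o}f. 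With that repair, and granting the stated definition of invariance, your argument is complete; the remaining steps (fine upper semicontinuity of $-s$ on $\overline{V_j}^{\cal F}\subset U$, the inclusion $\partial_{\cal F}V_j\subset U\setminus V_j$, and the identity $\widehat R_u^{U\setminus V_j}=R_u^{U\setminus V_j}=u$ via the fine continuity of $u$) are all correctly handled.
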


\begin{proof}
We may suppose that $U$ is a fine domain of $\CC$.
Suppose that $-u$  maximal. By the Riesz decomposition
of $u$, there exist a fine potential $p$ on $U$ and an invariant function
$h$ on $U$ such that $u=p+h$.
By \cite[Theorem 4.4]{F4} and \cite[Corollaire, p. 132]{F5}, there exists a increasing sequence $(V_j)$
of finely open subsets of $U$ such that $\bigcup_jV_j=U$, $\overline{V_j}\subset V_{j+1}$
for each $j$, ${\inf}_j\widehat R_p^{U\smallsetminus V_j}=0$ q.e. (that is, outside of a polar subset of $U$) and
$R_h^{U\smallsetminus V_j}=h$ for any $j$.  Let $j$ be an integer $>0$ and $s$ a
finely superharmonic and nonnegative function on $U$ such that $s\ge u$ on $U\smallsetminus V_j$.
Since $-u$ is maximal we have $u\le s$ on $V_j$. We then deduce  that $u\le R_u^{U\smallsetminus V_j}$
and hence $u=R_u^{U\smallsetminus V_j}=\widehat R_u^{U\smallsetminus V_j}$. It follows that
$p=\widehat R_p^{U\smallsetminus V_j}$ for every $j$ and, by passing to the infimum over $j$, we obtain $p=0$ so that
$u=h$ is invariant.
\end{proof}

\begin{cor}
Let $u$ be a finely subharmonic function on a finely open subset $U$ of
$\CC$  bounded from above by some constant $c$. Then $u$ is maximal if and only $c-u$
is invariant.
\end{cor}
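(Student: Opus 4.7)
The strategy is to reduce to Theorem \ref{thm4.1} for the ``only if'' direction and to run a sweeping-plus-gluing comparison for the ``if'' direction. The key is that $u\le c$ makes $c-u$ a nonnegative finely superharmonic function on $U$ and that maximality is insensitive to adding a constant.

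For the ``only if'' direction, if $u$ is maximal then so is $u-c=-(c-u)$, and Theorem \ref{thm4.1} applied to the nonnegative finely superharmonic function $c-u$ yields that $c-u$ is invariant.

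For the ``if'' direction, I take the exhaustion $(V_j)$ provided by the invariance of $c-u$, fix a bounded $\cal F$-open set $G$ with $\overline G\subset U$ and a competitor $w$ as in Definition \ref{def2.1.} with $w\le u$ on $\partial_{\cal F}G$, and paste $\tilde w:=\max(w,u)$ on $G$, $\tilde w:=u$ on $U\setminus G$. The $\cal F$-upper semicontinuous extension of $w$ to $\overline G^{\cal F}$ together with $w\le u$ on $\partial_{\cal F}G$ should make $\tilde w$ finely subharmonic on $U$ via the standard plurifine gluing principle. Setting $M:=\max(c,\sup_G w)$ (finite since $w$ is bounded above), $M-\tilde w$ lies in $\cal S^+(U)$, and for $j$ large enough that $\overline G\subset V_j$ one has $U\setminus V_j\subset U\setminus G$, so $M-\tilde w=M-u$ on $U\setminus V_j$. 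A minimum-principle argument shows that $M-u=(c-u)+(M-c)$ is invariant with the same exhaustion, so that
\[
M-u=\widehat R^{U\setminus V_j}_{M-u}\le R^{U\setminus V_j}_{M-u}\le M-\tilde w,
\]
yielding $\tilde w\le u$ on $U$. Combined with $\tilde w\ge u$ from the construction, this forces $\max(w,u)=u$ on $G$, i.e.,\ $w\le u$ on $G$, and maximality of $u$ follows.

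The main obstacle is the gluing step: verifying that $\tilde w$ is finely subharmonic across $\partial_{\cal F}G$ using only the $\cal F$-upper semicontinuous extension of $w$ and $w\le u$ on $\partial_{\cal F}G$. This is a standard plurifine analogue of the Euclidean gluing principle, but in the fine topology it requires care with Fuglede's machinery \cite{F1}. A secondary technical point is the preservation of invariance under adding a nonnegative constant, which follows from the minimum principle for finely superharmonic functions: if $s\in\cal S^+(U)$ satisfies $s\ge M-u$ on $U\setminus V_j$, then $s\ge M-c$ there, and the minimum principle extends this bound to $V_j$, so $s-(M-c)\in\cal S^+(U)$ can be compared with $c-u$ via its invariance.
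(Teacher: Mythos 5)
Your ``only if'' direction is exactly the paper's argument: the corollary is stated there as an immediate consequence of Theorem \ref{thm4.1} applied to $c-u$, using that maximality is unaffected by adding a constant and that $c-u\in\cal S^+(U)$. (Both you and the paper silently drop the regularity hypothesis on $U$ that Theorem \ref{thm4.1} carries.) For the ``if'' direction the paper offers no details, so your gluing-plus-sweeping argument is a genuine attempt to fill that in, and its overall shape is right; but as written it has a gap.

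The problematic step is ``for $j$ large enough that $\overline G\subset V_j$''. The sets $V_j$ furnished by the definition of invariance are only \emph{finely} open, and the definition provides merely \emph{some} sequence with $\bigcup_j V_j=U$, not an increasing one; even if one arranges the sequence to be increasing, the compactness of $\overline G$ gives no finite subcover in the fine topology, so there is no reason any single $V_j$ should contain $G$. Without $U\setminus V_j\subset U\setminus G$ the inequality $R^{U\setminus V_j}_{M-u}\le M-\tilde w$ fails, since $M-\tilde w$ is only known to dominate $M-u$ off $G$. The repair is to use the stronger property of invariant functions from Fuglede's integral representation theory (\cite{F4}): an invariant $h$ satisfies $\widehat R_h^{U\setminus V}=h$ for \emph{every} finely open $V$ with $\overline V$ compact in $U$, so you may take $V=G$ directly and compare $M-\tilde w\ge R^{U\setminus G}_{M-u}\ge\widehat R^{U\setminus G}_{M-u}=M-u$. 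Your remaining ingredients are sound: the pasting of $\tilde w=\max(w,u)$ is legitimate by Fuglede's gluing lemma precisely because the $\cal F$-u.s.c.\ extension of $w$ and $w\le u$ on $\partial_{\cal F}G$ give $\flimsup_{y\to z}w(y)\le u(z)$ there, and your minimum-principle argument that $M-u=(c-u)+(M-c)$ is invariant is correct (a nonnegative constant is invariant and invariance is additive).
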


\begin{proof}
The corollary is an immediate consequence of the above theorem applied to
the function $c-u$.
\end{proof}

\begin{remarks} 1. It follows from \cite[Th\'eor\`eme 2.3]{F3} that any finely  harmonic function on $U$ is $\cal F$-maximal.
Conversely, any finite nonnegative $\cal F$-maximal function on $U$ is finely harmonic because any finite invariant function is
finely harmonic.

2. The Example 4.19 in \cite{EK-MS} shows that the converse of Proposition 2.10 in \cite{EK-MS} is not true.
However, we do not know if, in general, the converse of Theorem \ref{thm4.1} is true.
\end{remarks}

\section{$\cal F$-local maximality and $\cal F$-maximality
for finite $\cal F$-plurisubharmonic functions}

In \cite[Example 4.19]{EK-MS},
it is proved that
an $\cal F$-locally $\cal F$-maximal $\cal F$-psh function
need not be $\cal F$-maximal. In the present section we study
the $\cal F$-local maximality and $\cal F$-maximality
for finite $\cal F$-psh functions.

Let us recall the following result proven in  \cite{EK-MS}:

\begin{theorem}[{\cite[Theorem 4.15]{EK-MS}}]\label{thm3.1}
Let $U$ be an $\cal F$-open subset of $\CC^n$ and let $u\in \cal F$-$\PSH(U)$ be finite.
Then $(dd^cu)^n = 0$ if and only if $u$ is $\cal F$-locally $\cal F$-maximal on the complement
of an $\cal F$-closed pluripolar subset of $U.$
\end{theorem}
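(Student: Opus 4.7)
The plan is to reduce to the bounded case via the $\cal F$-local representation of $u$ as a difference of two bounded plurisubharmonic functions (Definition 2.2) and then to translate back and forth between $(dd^cu)^n$ and $\cal F$-maximality by means of Corollary \ref{cor2.6}.

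Converse direction. Suppose $u$ is $\cal F$-locally $\cal F$-maximal on $U\setminus F$ with $F$ an $\cal F$-closed pluripolar subset of $U$. Every $x\in U\setminus F$ admits an $\cal F$-open neighborhood $V_x\subset U\setminus F$ such that $u|_{V_x}$ is $\cal F$-maximal, and Theorem 3.2 (i.e. \cite[Theorem 4.8]{EK-MS}) then gives $(dd^cu)^n=0$ on $V_x$. The quasi-Lindel\"of property of the plurifine topology lets me cover $U\setminus F$ by countably many such $V_{x_k}$ up to a pluripolar remainder $N$; since $(dd^cu)^n$ is $\cal F$-locally finite (Theorem 2.3) and, being built from Bedford--Taylor measures of bounded psh functions via (2.1), does not charge pluripolar sets, I conclude
$$(dd^cu)^n(U)\le (dd^cu)^n(F\cup N)+\sum_k(dd^cu)^n(V_{x_k})=0.$$

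Forward direction. Starting from $(dd^cu)^n=0$, I apply Definition 2.2 to obtain a pluripolar set $E\subset U$, an $\cal F$-open cover $(O_j)_{j\ge 1}$ of $U\setminus E$, and bounded plurisubharmonic functions $f_j,g_j$ defined on Euclidean neighborhoods of $\overline{O_j}$ with $u=f_j-g_j$ on $O_j$. Since $E$ is pluripolar, there is $v\in\psh(\CC^n)$ with $E\subset\{v=-\infty\}$; because $v$ is $\cal F$-continuous, the set $F:=\{v=-\infty\}\cap U$ is an $\cal F$-closed pluripolar subset of $U$ containing $E$. On each $O_j$ the function $u$ is globally bounded, and formula (2.1) combined with the uniqueness statement \cite[Theorem 3.6]{EW} ensures that $(dd^cu)^n$ restricted to $O_j$ coincides with $(dd^c(f_j-g_j))^n$, hence vanishes there. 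Corollary \ref{cor2.6} applied on the $\cal F$-open set $O_j$ then yields that $u|_{O_j}$ is $\cal F$-maximal. Since $\cal F$-maximality is inherited by any $\cal F$-open subset and every point of $U\setminus F\subset\bigcup_j O_j$ lies in some $O_j$, $u$ is $\cal F$-locally $\cal F$-maximal on $U\setminus F$.

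The main obstacle is the correct localization of the plurifine Monge--Amp\`ere measure: one must verify that, regardless of the particular cover chosen in Definition 2.2, the restriction of $(dd^cu)^n$ to an $\cal F$-open piece on which $u$ equals a difference of bounded psh functions genuinely agrees with the Bedford--Taylor measure of that difference. Once this identification is secured through \cite[Theorem 3.6]{EW}, the reduction to the bounded setting via Corollary \ref{cor2.6} takes care of both implications and leaves only routine measure-theoretic bookkeeping.
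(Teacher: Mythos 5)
Your argument is essentially sound, but note that the paper itself offers no proof of this statement: it is imported verbatim as \cite[Theorem 4.15]{EK-MS}, so there is no in-paper argument to compare against. Both of your directions hold up. For the ``only if'' part, the chain ($\cal F$-maximal on $V_x$ $\Rightarrow$ $(dd^cu)^n=0$ on $V_x$ by \cite[Theorem 4.8]{EK-MS}, then quasi-Lindel\"of plus the fact that $(dd^cu)^n$ does not charge pluripolar sets) is exactly the natural route, and your appeal to the localization of the Monge--Amp\`ere measure via \cite[Theorem 3.6]{EW} is the right way to identify $(dd^c(u|_{V_x}))^n$ with the restriction of $(dd^cu)^n$. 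For the ``if'' part, your construction of the $\cal F$-closed pluripolar set $F=\{v=-\infty\}\cap U$ via Josefson's theorem, and the observation that $\cal F$-maximality passes to $\cal F$-open subsets because the test sets $G$ with $\overline G\subset V$ form a subfamily of those for the larger set, are both correct. The one point worth flagging is logistical rather than mathematical: you invoke Corollary \ref{cor2.6}, which in this paper rests on Theorem \ref{thm2.4} and on \cite[Theorem 4.14]{EK-MS}; since neither of these depends on \cite[Theorem 4.15]{EK-MS}, there is no circularity, but the original proof in \cite{EK-MS} necessarily proceeded from their bounded-case maximality criterion directly rather than through the locally bounded strengthening proved here. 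Your reduction to the bounded case on the pieces $O_j$ of the Definition 2.2 decomposition is a clean and legitimate substitute.
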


The following theorem improves Theorem \ref{thm3.1}:
\begin{theorem}\label{thm3.2}
Let $u$ be a finite $\cal F$-psh function on an $\cal F$-open subset $U$ of $\CC^n$.
Then $u$ is $\cal F$-locally $\cal F$-maximal if and only if $(dd^cu)^n=0$.
\end{theorem}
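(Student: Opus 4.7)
The plan is to treat the two implications separately. The forward direction follows immediately from Theorem~\ref{thm3.1}: if $u$ is $\cal F$-locally $\cal F$-maximal on all of $U$, it is in particular $\cal F$-locally $\cal F$-maximal on the complement of the empty set (which is trivially $\cal F$-closed and pluripolar), so the theorem yields $(dd^cu)^n=0$.

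For the converse, assume $(dd^cu)^n=0$ and fix an arbitrary $z_0\in U$; it suffices to exhibit an $\cal F$-open neighborhood $V\ni z_0$ on which $u$ is $\cal F$-maximal. The key step is to reduce to the bounded case. Because $u$ is a finite $\cal F$-psh function, the $\cal F$-local representation established in \cite{EW2} (and invoked via \cite[Theorem~2.17]{EW} in Definition~2.2) allows one to choose $V\subset U$ together with bounded plurisubharmonic functions $f,g$ defined on a Euclidean open neighborhood of $\overline V$ and satisfying $u=f-g$ on $V$. In particular $u$ is bounded on $V$; viewing $V$ itself as the ambient $\cal F$-open set, the restriction $u|_V$ is then Euclidean-locally bounded on $V$.

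Since the Monge--Amp\`ere measure is $\cal F$-locally determined (cf.\ \cite[Theorem~3.6]{EW}), the restriction of $(dd^cu)^n$ to $V$ coincides with $(dd^c u|_V)^n$ and hence still vanishes. Applying Corollary~\ref{cor2.6} with $V$ in place of $U$ then gives $u|_V\in\cal F$-$\MPSH(V)$, i.e.\ $u$ is $\cal F$-maximal on $V$; as $z_0\in U$ was arbitrary, this proves that $u$ is $\cal F$-locally $\cal F$-maximal on $U$. The whole argument rests on the observation that a finite $\cal F$-psh function, although possibly unbounded on every Euclidean neighborhood of a given point, is nevertheless bounded on some $\cal F$-open neighborhood of that point, which is precisely the hypothesis Corollary~\ref{cor2.6} requires. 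I do not anticipate a deeper obstacle; the only point deserving explicit verification is the identity $(dd^cu)^n|_V=(dd^c u|_V)^n$, which follows from the independence of the Monge--Amp\`ere measure on the choice of cover asserted in Definition~2.2 (take a cover of $U$ in which $V$ appears as one of the $\cal F$-open pieces).
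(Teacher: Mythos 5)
Your proof is correct and takes essentially the same route as the paper: the ``only if'' direction is read off from Theorem~\ref{thm3.1}, and the ``if'' direction reduces to Corollary~\ref{cor2.6} after covering $U$ by $\cal F$-open sets on which $u$ is bounded. The only difference is in how that cover is produced --- the paper uses the level sets $V_j=\{-j<u<j\}$, which are $\cal F$-open by the $\cal F$-continuity of $u$, while you invoke the local representation $u=f-g$ with $f,g$ bounded plurisubharmonic from \cite{EW2}; both deliver the boundedness that Corollary~\ref{cor2.6} needs, and your explicit check that $(dd^cu)^n$ localizes correctly to the chosen neighborhood is a point the paper leaves implicit.
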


\begin{proof}
The part "only if" follows from Theorem \ref{thm3.1}. Let us prove the part "if".
Suppose that $(dd^cu)^n=0$. For any integer $j>0$ put $V_j=\{-j<u<j\}$. Then $V_j$ is
an $\cal F$-open and $u$ is bounded on $V_j$
and therefore
$u$ is maximal on $V_j$ by Corollary \ref{cor2.6}. Since $U=\bigcup_j V_j$, it follows that $u$ is
$\cal F$-locally $\cal F$-maximal on $U$.
\end{proof}

In a Euclidean open subset of $\CC^n$, it is proved in \cite{EK-MS} that  any
finite plurisubharmonic function  which is locally maximal is necessarily maximal (see  \cite[Theorem 4.22]{EK-MS}).
This leads us to ask the following question:

\begin{question}
Is any $\cal F$-locally $\cal F$-maximal finite $\cal F$-plurisubharmonic function
on an $\cal F$-open subset of $\CC^n$ $\cal F$-maximal?
\end{question}
The answer is  yes if $n=1$. Indeed a finite $\cal F$-locally
$\cal F$-maximal function on an $\cal F$-open $U$ subset of $\CC$ is just  finely locally finely harmonic
on $U$ and hence finely harmonic on $U$ by \cite[8.6, p. 70]{F1}, that is, $\cal F$-maximal on $U$.

The following theorem improves Theorem 4.22 in \cite{EK-MS}:

\begin{theorem}\label{thm5.4}
Let $u$ be a finite psh function on an open subset $U$ of $\CC^n$. Then $u$ is
maximal if and only if $u$ is $\cal F$-locally $\cal F$-maximal.
\end{theorem}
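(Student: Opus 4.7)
The plan is to establish both implications by characterizing each notion of maximality in terms of the vanishing of the plurifine Monge--Amp\`ere measure. Theorem \ref{thm3.2} already gives that $u$ is $\cal F$-locally $\cal F$-maximal if and only if $(dd^cu)^n=0$, so it suffices to show that, for our finite psh $u$ on the Euclidean open $U$, classical maximality is equivalent to $(dd^cu)^n=0$ as a plurifine measure.

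The bridge is the $\cal F$-open stratification $V_j:=\{-j<u<j\}$, which covers $U$ since $u$ is finite. On each $V_j$ the function $u$ coincides with the locally bounded psh function $u_j:=\max(u,-j)$, so Definition 2.2 (with $f_j=u_j$, $g_j=0$, and a Euclidean open neighborhood of $\overline{V_j}$ in $U$) identifies $(dd^cu)^n\big|_{V_j}$ with the classical Bedford--Taylor measure $(dd^cu_j)^n\big|_{V_j}$. Moreover, $V_j$ differs from its Euclidean interior $\Omega_j$ by a pluripolar set (since $\{u>-j\}$ differs from $\{u_*>-j\}$ by the pluripolar set $\{u\neq u_*\}$, $u_*$ being the lower semicontinuous regularization of $u$), and the Bedford--Taylor measure of the bounded psh function $u_j$ does not charge pluripolar sets.

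For the ``only if'' direction, classical maximality of $u$ on $U$ restricts to classical maximality of the bounded psh $u_j=u$ on the Euclidean open set $\Omega_j$; Bedford--Taylor yields $(dd^cu_j)^n=0$ on $\Omega_j$, hence on $V_j$ by the pluripolar-non-charging property, and taking the union over $j$ gives $(dd^cu)^n=0$ on $U$, so Theorem \ref{thm3.2} concludes. For the ``if'' direction, $(dd^cu)^n=0$ on $U$ gives $(dd^cu_j)^n=0$ on $V_j$; given a test pair $(G,v)$ for classical maximality, with $G\Subset U$ Euclidean open and $v\in\PSH(G)$ bounded above with $v\le u$ on $\partial G$, the Bedford--Taylor comparison principle applied between $u_j$ and (a truncation of) $v$ on $G$, using $v\le u\le u_j$ on $\partial G$, yields $v\le u_j$ on $G$, and then $u_j\downarrow u$ as $j\to\infty$ gives $v\le u$. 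The main obstacle is controlling the possible extra Monge--Amp\`ere mass of $u_j$ on the level set $\{u=-j\}$ (outside $V_j$), which is handled by the pluripolar-non-charging property of $(dd^cu_j)^n$ in the ``only if'' direction and by the limit $j\to\infty$ in the ``if'' direction.
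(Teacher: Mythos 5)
Your reduction of both directions to the vanishing of the plurifine Monge--Amp\`ere measure is a reasonable strategy (the paper's own proof of the ``if'' part also passes through $(dd^cu)^n=0$), but both of the steps you use to cross between the plurifine and the classical settings contain genuine gaps. In the ``only if'' direction you assert that $V_j=\{-j<u<j\}$ differs from its Euclidean interior by a pluripolar set because $\{u\neq u_*\}$ is pluripolar. This is not a known fact and is false in general: a finite plurisubharmonic function need not be lower semicontinuous outside a pluripolar set (quasi-continuity is a capacity statement, not a pluripolarity statement), so the $\cal F$-open set $\{u>-j\}$ may fail to contain a Euclidean neighbourhood of a non-pluripolar set of its points. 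This direction does not actually require such an argument: classical maximality implies $\cal F$-maximality for psh functions on Euclidean open sets (one of the results of \cite{EK-MS} the paper relies on), hence $\cal F$-local $\cal F$-maximality, which is why the paper dismisses this direction with one sentence.

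The more serious gap is in the ``if'' direction, at the comparison step. Since $(dd^cu_j)^n$ is only known to vanish on $V_j$, the Bedford--Taylor comparison principle on $G$ between $u_j=\max(u,-j)$ and $v$ yields $\int_{\{u_j<v\}}(dd^cv)^n\le\int_{\{u_j<v\}}(dd^cu_j)^n$, and the right-hand side equals $\int_{\{u_j<v\}\cap\{u\le-j\}}(dd^cu_j)^n$, which can be strictly positive; nothing in your argument controls it, so you cannot conclude $v\le u_j$ on $G$ --- indeed $u_j$ need not be maximal on $G$ at all. This residual mass on the set $\{u\le-j\}$ is precisely the crux of the theorem, and saying it is ``handled by the limit $j\to\infty$'' is an assertion, not a proof. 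What is actually needed is (i) the weak convergence $(dd^cu_j)^n\to0$ on $U$, which the paper obtains by following the proof of \cite[Proposition 4.22]{EK-MS}, and (ii) B\l ocki's theorem \cite[Theorem 4.4]{BL1}, which says that the decreasing limit of locally bounded psh functions whose Monge--Amp\`ere measures tend weakly to zero is maximal. Both ingredients are absent from your proposal, so the ``if'' direction is not established.
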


\begin{proof}
There is to prove only the part "if". Suppose that $u$ is
$\cal F$-locally $\cal F$-maximal, then $u$ is $\cal F$-maximal  on each $\cal F$-open
set $V_j=\{-j<u\}$, $j\in \NN^*$. Hence $u$ is maximal by
Theorem \ref{thm2.4} on each $V_j$ because it is locally bounded there ($u_j$ being
upper semicontinuous on $U$ and hence on $V_j$).
It follows that $(dd^cu)^n=0$ on $V_j$ for any $j$ so that
$(dd^cu)^n=0$ on $U$. For each $j$, let $u_j=\max\{u,-j\}$, then we have, as in the proof of \cite[Proposition 4.22]{EK-MS},
$\lim (dd^cu_j)^n=0$ weakly on $U$. It follows by \cite[Theorem 4.4]{BL1} that $u$ is maximal.
\end{proof}

\begin{remark}
We cannot drop the hypothesis that  $u$ is finite in the above theorem, see Example 4.19 in \cite{EK-MS}.
\end{remark}

\begin{cor}
Let $u$ be a finite
plurisubharmonic function on an open subset of $\CC^n$. Then $u$ is
maximal if and only if $(dd^cu)^n=0$.
\end{cor}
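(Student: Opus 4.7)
The plan is to chain together the two preceding theorems of this section. Since $U$ is a Euclidean open set, in particular $U$ is $\cal F$-open, and any ordinary plurisubharmonic function $u$ on $U$ is automatically $\cal F$-plurisubharmonic: the plurifine topology is finer than the Euclidean one so $u$ remains $\cal F$-upper semicontinuous, and the restriction of $u$ to each complex line $l$ is subharmonic on $l\cap U$, hence finely subharmonic on every $\cal F$-component of $l\cap U$. Thus $u$ is a finite element of $\cal F$-$\PSH(U)$, and both Theorem \ref{thm5.4} and Theorem \ref{thm3.2} apply to it.

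First I would apply Theorem \ref{thm5.4} to $u$: since $u$ is a finite psh function on a Euclidean open set $U\subset\CC^n$, it is maximal if and only if it is $\cal F$-locally $\cal F$-maximal. Second, I would apply Theorem \ref{thm3.2} to $u$ regarded as a finite $\cal F$-psh function on $U$: it is $\cal F$-locally $\cal F$-maximal if and only if $(dd^cu)^n=0$. Concatenating these two biconditionals yields the statement of the corollary.

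The only point that warrants a sentence of comment is that the measure $(dd^cu)^n$ appearing in the conclusion is unambiguous: the $\cal F$-Monge--Amp\`ere measure of Definition 2.2, applied to an ordinary psh function $u$ on the Euclidean open set $U$, coincides with the classical Bedford--Taylor Monge--Amp\`ere measure. This is immediate from Definition 2.2: one may take the trivial decomposition $O_1=U$, $E=\emptyset$, together with a standard local writing of $u$ as a difference of bounded psh functions on Euclidean neighborhoods, so that the right-hand side of (2.1) reduces to the classical expression $\int_A(dd^cu)^n$.

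There is no real obstacle here; the substantive work lies in Theorem \ref{thm5.4} (which in turn rests on Theorem \ref{thm2.4} and the convergence result \cite[Theorem 4.4]{BL1}) and in Theorem \ref{thm3.2}. The corollary itself is a pure corollary, and nothing beyond the two applications above is needed.
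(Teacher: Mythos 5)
Your proposal is correct and takes essentially the same route as the paper: the ``if'' direction is exactly the paper's chain (Theorem \ref{thm3.2} followed by Theorem \ref{thm5.4}), and for the ``only if'' direction the paper simply cites \cite[Theorem 4.8]{EK-MS} directly rather than concatenating the converse halves of the same two theorems, an immaterial difference.
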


\begin{proof}
The part "only if" follows by Theorem 4.8 in \cite{EK-MS}. If $(dd^cu)^n=0$, then
$u$ is $\cal F$-locally maximal by Theorem \ref{thm3.2}, hence it is  maximal by the Theorem \ref{thm5.4}.
\end{proof}

In \cite{Ho2} it is proved that if $u$ is a bounded $\cal F$-psh
function on an $\cal F$-open subset $U$ of $\CC^n$, then for any integer $m\ge 1$ the function
$u\circ \pi$ is $\cal F$-maximal on $U\times \CC^m$ where $\pi$
is the canonical projection from $\CC^{n+m}$ into $\CC^n$. Here we generalize this result
and give a  simple and direct proof of it.

\begin{prop}\label{prop5.7}
Let $u$ be a finite $\cal F$-psh
function on an $\cal F$-open subset $U$ of $\CC^n$, then for any integer $m\ge 1$ the function
$u\circ \pi$ is $\cal F$-locally $\cal F$-maximal on $U\times \CC^m$, where $\pi$
is the canonical projection from $\CC^{n+m}$ into $\CC^n$.
\end{prop}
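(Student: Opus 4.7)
The plan is to reduce to Theorem \ref{thm3.2}, which characterizes $\cal F$-local $\cal F$-maximality of a finite $\cal F$-plurisubharmonic function by the vanishing of its top Monge--Amp\`ere. Thus it suffices to verify (i) that $u\circ\pi$ is a finite $\cal F$-plurisubharmonic function on $U\times\CC^m$, and (ii) that $(dd^c(u\circ\pi))^{n+m}=0$.

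For (i), $u\circ\pi$ is plainly finite and $\cal F$-upper semicontinuous by continuity of $\pi$. Given a complex line $l\subset\CC^{n+m}$, either $\pi|_l$ is constant (so $u\circ\pi$ is constant on $l$), or $\pi|_l\colon l\to\pi(l)$ is an affine isomorphism of complex lines; in the latter case the $\cal F$-components of $l\cap(U\times\CC^m)$ correspond bijectively to those of $\pi(l)\cap U$ under $\pi|_l$, and fine subharmonicity (or identically $-\infty$) on each component pulls back along this affine isomorphism.

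For (ii) I use the local representation of Definition 2.2: there exist a pluripolar set $E\subset U$, $\cal F$-open sets $O_j$ with $U=E\cup\bigcup_j O_j$, and bounded plurisubharmonic $f_j,g_j$ defined on Euclidean neighborhoods of $\overline{O_j}$ with $u=f_j-g_j$ on $O_j$. Setting $\tilde O_j:=O_j\times\CC^m$ produces the analogous decomposition of $U\times\CC^m$: the remainder $E\times\CC^m$ is pluripolar in $\CC^{n+m}$ (if $E\subset\{h=-\infty\}$ for some psh $h$ on $\CC^n$, then $E\times\CC^m\subset\{h\circ\pi=-\infty\}$), and on each $\tilde O_j$ one has $u\circ\pi=(f_j\circ\pi)-(g_j\circ\pi)$ with both summands bounded psh on Euclidean neighborhoods of $\overline{\tilde O_j}$. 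Applying formula (2.1) in dimension $n+m$ then reduces the task to checking $(dd^c(f_j\circ\pi-g_j\circ\pi))^{n+m}=0$ on each $\tilde O_j$.

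This final vanishing rests on the following fact, which I expect to be the main technical point: for bounded plurisubharmonic functions $v_1,\ldots,v_{n+m}$ on an open subset of $\CC^{n+m}$ that depend only on the first $n$ coordinates, the Bedford--Taylor mixed Monge--Amp\`ere $dd^cv_1\wedge\cdots\wedge dd^cv_{n+m}$ vanishes. In the smooth case this is immediate by a type count, since each $dd^cv_i$ involves only $dz_j\wedge d\bar z_k$ for $j,k\le n$ and the space of $(n+m,n+m)$-forms in $n$ holomorphic variables is trivial once $m\ge 1$. The general bounded case follows by mollifying each $v_i$ in the $z$-variable alone --- which preserves plurisubharmonicity, boundedness and independence from $w$ --- and invoking Bedford--Taylor continuity of mixed Monge--Amp\`ere along monotone decreasing sequences of bounded psh functions. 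Expanding $(dd^c(f_j\circ\pi-g_j\circ\pi))^{n+m}$ as a signed sum of mixed wedges then gives zero, and Theorem \ref{thm3.2} yields the conclusion.
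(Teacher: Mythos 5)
Your proof is correct, but it takes a genuinely different route from the paper's. The paper argues directly from Definition \ref{def2.1.}: using \cite[Theorem 3.1]{EW2} it writes $u=u_1-u_2$ $\cal F$-locally with $u_1,u_2$ bounded plurisubharmonic on a Euclidean open set $O_z$, quotes \cite[Proposition 3.5]{HH} to get that $u_1\circ\pi$ and $u_2\circ\pi$ are maximal plurisubharmonic on $O_z\times\CC^m$, and then runs the comparison $v\le u\circ\pi$ on $\partial_{\cal F}G\Rightarrow v+u_2\circ\pi\le u_1\circ\pi$ on $\partial_{\cal F}G\Rightarrow v+u_2\circ\pi\le u_1\circ\pi$ on $G$ by the ($\cal F$-)maximality of $u_1\circ\pi$. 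You instead verify that $u\circ\pi$ is a finite $\cal F$-psh function, show $(dd^c(u\circ\pi))^{n+m}=0$ by transporting the decomposition of Definition 2.2 and using a type-count-plus-mollification argument (which in effect reproves the input from \cite{HH} from scratch), and then invoke the ``if'' part of Theorem \ref{thm3.2}. Your route leans on the Monge--Amp\`ere machinery and hence, through Corollary \ref{cor2.6}, on Theorem \ref{thm2.4}, whereas the paper's proof is independent of the Monge--Amp\`ere operator; on the other hand yours is more self-contained (no appeal to \cite{HH}) and the key vanishing lemma is elementary and of independent interest. Two small points worth tidying: (a) the sets $O_j\times\CC^m$ are unbounded, so you should either check that such decompositions are admissible for the independence theorem \cite[Theorem 3.6]{EW} or intersect with an exhaustion of $\CC^m$ by balls and re-enumerate; (b) in your verification of $\cal F$-plurisubharmonicity you should note explicitly that a $\CC$-affine isomorphism between complex lines is a homeomorphism for their fine topologies and preserves fine subharmonicity, which is what lets fine components and the line condition transport along $\pi|_l$.
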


\begin{proof} For any $z\in U$, there exist by \cite [Theorem 3.1]{EW2}
an $\cal F$-open neighborhood $V_z$ of $z$ and a Euclidean
open set $O_z$ such that $z\in O_z$, $V_z\subset U$, and two bounded plurisubharmonic functions
$u_1$ and $u_2$ on $O_z$ such that $u=u_1-u_2$ on $O_z\cap V_z$. The functions $u_1\circ \pi$ and
$u_2\circ \pi$ are maximal plurisubharmonic functions on $O_z\times \CC^m$ by \cite[Proposition 3.5]{HH} and we have
$u\circ \pi=u_1\circ\pi - u_2\circ \pi$ on $(O_z\times \CC^m)\cap (V_z\times \CC^m)$ (this last set being an $\cal F$-open
subset of $\CC^{n+m}$).
Let $G$ be a bounded $\cal F$-open subset of $U\times \CC^m$ such that
$G\subset {\overline G}\subset (O_z\times \CC^m)\cap (V_z\times \CC^m)$ and 
let $v$ be a
$\cal F$-psh function on $G$ that can be extended upper semicontinuously to  ${\overline G}^{\cal F}$
denoted also by $v$ and such that $v\le u\circ\pi$ on $\partial_{\cal F}G$. Then we have
$v+u_2\circ \pi\le u_1\circ \pi$ on $\partial_{\cal F}G$ and hence $v+u_2\circ\pi \le u_1\circ\pi$ on $G$ since
$u_1\circ\pi$ is $\cal F$-maximal on $O_z\times \CC^m$ (and hence on $(O_z\times \CC^m)\cap (V_z\times \CC^m$)).
It follows that $v\le u\circ\pi$ on $G$. Hence the function $u\circ\pi$ is $\cal F$-locally
maximal on $U\times \CC^m$.
\end{proof}

\begin{cor}\label{cor5.9}
Let $u$ be a locally bounded  $\cal F$-psh
function on an $\cal F$-open subset $U$ of $\CC^n$, then for any integer $m\ge 1$ the function
$u\circ \pi$ is $\cal F$-locally $\cal F$-maximal on $U\times \CC^m$, where $\pi$
is the canonical projection from $\CC^{n+m}$ into $\CC^n$.
\end{cor}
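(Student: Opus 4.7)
The plan is to recognize that Corollary~\ref{cor5.9} is a direct specialization of Proposition~\ref{prop5.7}. First I would note the elementary fact that any function locally bounded in the Euclidean sense on $U$ must be finite-valued at every point of $U$: for $z\in U$, choose a Euclidean neighborhood $W$ of $z$ on which $u$ is bounded; then $u(z)$ lies in that bounded interval, hence in $\RR$. Consequently, a locally bounded $\cal F$-psh function on $U$ is in particular a \emph{finite} $\cal F$-psh function on $U$, which is precisely the hypothesis of Proposition~\ref{prop5.7}.

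Second, I would invoke Proposition~\ref{prop5.7} directly: its conclusion, that $u\circ\pi$ is $\cal F$-locally $\cal F$-maximal on $U\times \CC^m$ for every integer $m\ge 1$, is verbatim the conclusion of Corollary~\ref{cor5.9}. No additional work involving the projection $\pi$, the $\cal F$-topology on $U\times \CC^m$, or the local representation of $u$ as a difference $u_1-u_2$ of bounded plurisubharmonic functions is required, because all of that has already been absorbed into Proposition~\ref{prop5.7}.

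I would also flag a natural strengthening that the corollary as stated does not record. Since compact subsets of $U\times\CC^m$ project to compact subsets of $U$, the pullback $u\circ\pi$ is in fact locally bounded on $U\times\CC^m$. Combining this observation with the $\cal F$-local $\cal F$-maximality just established and with Theorem~\ref{thm2.4}, one obtains that $u\circ\pi$ is actually $\cal F$-maximal (not merely $\cal F$-locally $\cal F$-maximal) on $U\times\CC^m$. Because the whole reduction is essentially a one-line implication, I anticipate no real obstacle; the only decision is whether to stop at the stated conclusion or to upgrade it via Theorem~\ref{thm2.4}.
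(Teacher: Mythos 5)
Your proposal is correct and follows the paper's own proof, which simply states that the corollary is an immediate consequence of Proposition \ref{prop5.7} and Theorem \ref{thm2.4}. You are in fact slightly more precise than the paper: you rightly observe that Proposition \ref{prop5.7} alone (via locally bounded $\Rightarrow$ finite) already yields the stated conclusion, and that the paper's additional citation of Theorem \ref{thm2.4} corresponds to the upgrade to full $\cal F$-maximality that you flag at the end.
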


\begin{proof}
The corollary is an immediate consequence of  Proposition \ref{prop5.7} and Theorem \ref{thm2.4}.
\end{proof}

\begin{lemma}\label{lemma5.9}
Let $(u_j)$ be a decreasing sequence of maximal psh functions on
an $\cal F$-open $U$ of $\CC^n$.
Then $u=\inf u_j$ is maximal on $U$.
\end{lemma}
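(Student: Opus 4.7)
The plan is to verify the definition of $\cal F$-maximality directly, exploiting the fact that the same test function can be compared against every member of the decreasing sequence simultaneously. Since $(u_j)$ is decreasing we have $u = \inf_j u_j = \lim_{j\to\infty} u_j$, so the first step is to observe that $u$ is $\cal F$-plurisubharmonic on $U$ (or identically $-\infty$ on an $\cal F$-component of $U$, in which case there is nothing to check there). This rests on the standard stability of $\cal F$-$\PSH$ under decreasing limits: $\cal F$-upper semicontinuity is preserved under decreasing infima, and the restriction to any complex line is a decreasing limit of finely subharmonic functions, hence finely subharmonic or identically $-\infty$ on each fine component.

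With $u \in \cal F$-$\PSH(U)$ in hand, I would then take an arbitrary bounded $\cal F$-open set $G$ with $\overline{G}\subset U$ together with a test function $v\in \cal F$-$\PSH(G)$ which is bounded above on $G$, extends $\cal F$-upper semicontinuously to $\overline{G}^{\cal F}$, and satisfies $v \le u$ on $\partial_{\cal F}G$. Since $u \le u_j$ pointwise for every $j$, the inequality $v \le u_j$ holds on $\partial_{\cal F}G$ for each $j$. The very same $v$ already satisfies all the hypotheses required to test the $\cal F$-maximality of $u_j$, so applying the maximality of $u_j$ on $G$ yields $v \le u_j$ on $G$ for every $j$. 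Passing to the infimum in $j$ gives $v \le u$ on $G$, which is precisely the $\cal F$-maximality of $u$.

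The main (and essentially only) point requiring care is the initial stability statement that a decreasing limit of $\cal F$-psh functions remains $\cal F$-psh; once that is settled, the comparison step is a one-line diagonal application of the definition. A minor notational matter is that the statement reads ``maximal psh'' rather than ``$\cal F$-maximal $\cal F$-psh''; given that $U$ is $\cal F$-open and the lemma sits inside the section on $\cal F$-maximality, I would read it as the latter, and the argument above works verbatim under either interpretation.
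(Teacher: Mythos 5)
Your argument is correct and coincides with the paper's own proof: both test an arbitrary $v$ against each $u_j$ using $v\le u\le u_j$ on $\partial_{\cal F}G$ and then pass to the infimum. The only difference is that you explicitly record the (standard) fact that a decreasing limit of $\cal F$-psh functions is $\cal F$-psh, which the paper leaves implicit.
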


\begin{proof}
Let $G$ be a bounded $\cal F$-open subset of $\CC^n$ such that $\overline G\subset U$ and let $v$ be
an $\cal F$-upper semicontinuous function on
$\overline G^{\cal F}$ that is bounded from above such that $v\in {\cal F}$-$\PSH(G)$
and $v\le u$ on  $\partial_{\cal F} G$.  Then
for any $j$ we have $v\le u_j$ on $G$ because $u_j$ is $\cal F$-maximal on $U$.
It follows then that $v\le u$ on $U$. Hence $u$ is $\cal F$-maximal on $U$.
\end{proof}

\begin{cor}\label{cor5.10}
Let $u$ is a locally upper bounded $\cal F$-psh
function on an $\cal F$-open subset $U$ of $\CC^n$, then for any integer $m\ge 1$ the function
$u\circ \pi$ is $\cal F$-locally $\cal F$-maximal on $U\times \CC^m$ where $\pi$
is the canonical projection from $\CC^{n+m}$ into $\CC^n$.
\end{cor}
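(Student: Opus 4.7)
The plan is to reduce to the locally bounded case by truncating $u$ from below, then pass to the limit using Lemma \ref{lemma5.9}. For each integer $j\ge 1$, set $u_j:=\max(u,-j)$. Because $u$ is locally (Euclidean) upper bounded and $u_j\ge -j$, the function $u_j$ is locally bounded; it is $\cal F$-psh as the maximum of an $\cal F$-psh function and a constant, and it is finite. The sequence $(u_j\circ\pi)_j$ is decreasing and converges pointwise to $u\circ\pi$ on $U\times\CC^m$.

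Fix $(z_0,w_0)\in U\times\CC^m$. By \cite[Theorem 3.1]{EW2} there exist an $\cal F$-open neighborhood $W\subset U$ of $z_0$, a Euclidean open set $O\supset W$, and bounded psh functions $f,g$ on $O$ with $u=f-g$ on $W$. The crucial observation is that on $W$ one has the \emph{uniform} decomposition
$$u_j=\max(f-g,-j)=\max(f,g-j)-g=F_j-g,$$
where $F_j:=\max(f,g-j)$ is bounded psh on $O$ and $g$ is the \emph{same} function for every $j$. By \cite[Proposition 3.5]{HH}, $F_j\circ\pi$ and $g\circ\pi$ are bounded maximal psh functions on $O\times\CC^m$. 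Repeating verbatim the argument used in the proof of Proposition \ref{prop5.7}: for any bounded $\cal F$-open set $G$ with $\overline G\subset W\times\CC^m$ and any $\cal F$-psh $v$ on $G$ bounded from above, $\cal F$-u.s.c. on $\overline G^{\cal F}$, with $v\le u_j\circ\pi$ on $\partial_{\cal F}G$, we get $v+g\circ\pi\le F_j\circ\pi$ on $\partial_{\cal F}G$, hence on $G$ by $\cal F$-maximality of $F_j\circ\pi$, and therefore $v\le u_j\circ\pi$ on $G$. Thus every $u_j\circ\pi$ is $\cal F$-maximal on the \emph{common} $\cal F$-open neighborhood $W\times\CC^m$ of $(z_0,w_0)$.

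Since $(u_j\circ\pi)$ is a decreasing sequence of $\cal F$-maximal $\cal F$-psh functions on $W\times\CC^m$ with infimum $u\circ\pi$, Lemma \ref{lemma5.9} yields that $u\circ\pi$ is $\cal F$-maximal on $W\times\CC^m$. As $(z_0,w_0)$ was arbitrary, $u\circ\pi$ is $\cal F$-locally $\cal F$-maximal on $U\times\CC^m$, which is the desired conclusion.

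The main obstacle is producing a common $\cal F$-open neighborhood on which \emph{all} the truncations $u_j\circ\pi$ are simultaneously $\cal F$-maximal, since Lemma \ref{lemma5.9} requires a fixed domain. Applying Proposition \ref{prop5.7} separately to each finite $\cal F$-psh $u_j$ would only give a neighborhood depending on $j$, which is insufficient. The uniform decomposition $u_j=F_j-g$, obtained by rewriting $\max(f-g,-j)$ with the same $g$ throughout, is precisely what removes this $j$-dependence and lets the argument close.
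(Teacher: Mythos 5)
Your reduction to the truncations $u_j=\max(u,-j)$ and the final appeal to Lemma \ref{lemma5.9} match the paper's strategy, but the middle step has a genuine gap. You apply \cite[Theorem 3.1]{EW2} to $u$ itself to get $u=f-g$ near $z_0$ with $f,g$ \emph{bounded} psh. That decomposition exists only where $u$ is finite --- indeed it forces $u$ to be bounded on $W\cap O$ --- whereas the corollary assumes only that $u$ is locally upper bounded, so $u$ may take the value $-\infty$. Near such points no decomposition into bounded psh functions exists, and your ``uniform decomposition'' $u_j=\max(f,g-j)-g$ is unavailable. Worse, wherever the decomposition \emph{does} exist, $u$ is bounded there and $u_j=u$ for all large $j$, so the truncation is vacuous and your argument collapses to Proposition \ref{prop5.7}; it never engages with the unbounded-below case, which is precisely the new content of this corollary over Proposition \ref{prop5.7} and Corollary \ref{cor5.9}.

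The obstacle you flag --- that applying Proposition \ref{prop5.7} to each $u_j$ gives a neighborhood depending on $j$, while Lemma \ref{lemma5.9} needs a fixed domain --- is real, but the paper removes the $j$-dependence differently and without any new decomposition: since $u_j\circ\pi$ is locally bounded, Corollary \ref{cor5.9} makes it $\cal F$-locally $\cal F$-maximal, and Theorem \ref{thm2.4} then upgrades this to $\cal F$-maximality on \emph{all} of $U\times\CC^m$. The common domain for Lemma \ref{lemma5.9} is thus the whole of $U\times\CC^m$, and $u\circ\pi=\inf_j(u_j\circ\pi)$ is $\cal F$-maximal there. Replacing your decomposition step by this use of Theorem \ref{thm2.4} repairs the proof.
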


\begin{proof}
Let $j\in \NN$, the function $u_j=\max(u,-j)$ is locally bounded on $U$, hence
$u_j\circ \pi$ is maximal in $U$  by  Corollary \ref{cor5.9} and Theorem \ref{thm2.4}.
It follows from Lemma \ref{lemma5.9} that $u\circ \pi=\inf_j(u_j\circ \pi)$ is $\cal F$-maximal in $U$.
\end{proof}

\begin{cor}
Let $v$ is an invariant function  on a finely open subset $U$ of $\CC$, then for any integer $m\ge 1$ the function
$u=-v\circ \pi$ is $\cal F$-locally $\cal F$-maximal on $U\times \CC^m$, where $\pi$
is the canonical projection from $\CC^{1+m}$ into $\CC$.
\end{cor}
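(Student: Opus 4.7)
The plan is a direct reduction to Corollary \ref{cor5.10}. Once the hypotheses of that corollary are checked for $-v$ on $U$, the conclusion follows with no further work.

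First I would unpack the assumption that $v$ is invariant on the finely open set $U\subset\CC$. By the definition recalled in Section 4, this means $v\in\cal S^+(U)$; in particular $v\ge 0$ and $v$ is finely superharmonic on $U$. Consequently $-v$ is finely subharmonic and satisfies $-v\le 0$ on $U$, so it is (globally, hence locally) upper bounded in the Euclidean sense.

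Next I would invoke the identification, valid in complex dimension $n=1$ and mentioned in the introduction, between finely subharmonic functions on finely open subsets of $\CC\cong\RR^2$ and $\cal F$-plurisubharmonic functions. This gives $-v\in\cal F$-$\PSH(U)$, so $-v$ is a locally upper bounded $\cal F$-psh function on the $\cal F$-open set $U$.

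Applying Corollary \ref{cor5.10} with $n=1$ to the function $-v$ then yields that $u=(-v)\circ\pi$ is $\cal F$-locally $\cal F$-maximal on $U\times\CC^m$, which is the assertion. No genuine obstacle is expected: the substantive content is already packaged in Corollary \ref{cor5.10} (which rests on Proposition \ref{prop5.7}, Lemma \ref{lemma5.9} and Theorem \ref{thm2.4}), and the only translation needed is \emph{invariant} $\leadsto$ \emph{locally upper bounded $\cal F$-psh after change of sign}, which is immediate from the definition.
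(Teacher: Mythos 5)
Your reduction is correct: an invariant function lies in $\cal S^+(U)$, so $-v$ is a finely subharmonic function bounded above by $0$, hence (for $n=1$) a locally upper bounded $\cal F$-psh function, and Corollary \ref{cor5.10} applies verbatim. The paper states this corollary without proof, and this immediate derivation from Corollary \ref{cor5.10} is clearly the intended one, so your argument matches the paper's approach.
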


\thebibliography{99}

\bibitem{AG} Armitage, D. H., S. J. Gardiner: \textit{Classical potential theory.}
Springer Monographs in Mathematics Springer-Verlag London, Ltd., London, 2001.

\bibitem{BT} Bedford, E. and  B. A. Taylor: \textit{Fine topology,  \v Silov boundary and $(dd^{c})^{n}$},
 J. Funct. Anal. {\bf 72} (1987), 225--251.


\bibitem{BL1} B\l ocki, Z.: \textit{Estimates for the Complex Monge-Amp\`ere Operator}
Bull. Polish. Acad. 41, no 2 (1993), 151-157.







\bibitem{D} Doob, J.L.: \textit{Classical Potential Theory and Its
Probabilistic Counterpart}, Grundlehren Math. Wiss. \textbf{262},
Springer, Berlin,   1984.

\bibitem{EK-MS} El Kadiri, M., Smit, I. M.: \textit{Maximal plurifinely plurisubharmonic functions},
 Potential Anal. 41 (2014), no. 4, 1329--1345.

\bibitem{EK} El Kadiri, M.: \textit{Fonctions finement plurisousharmoniques et
topologie plurifine},  Rend. Accad. Naz. Sci. XL Mem. Mat. Appl. (5) {\bf 27},  (2003), 77--88.

\bibitem{EFW} El Kadiri, M., B. Fuglede, J. Wiegerinck: \textit{Plurisubharmonic
and holomorphic functions relative to the plurifine
topology}, J. Math. Anal. Appl.  {\bf 381} (2011), no 2, 107--126.

\bibitem{EW} El Kadiri, M., Wiegerinck, J. \textit {Plurisubharmonic functions and the Monge-Amp\`ere Operator}, 
Potential Anal. 41 (2014), no 2, 469--485. 

\bibitem{E-W1} El Marzguioui, S., Wiegerinck, J.: \textit{The plurifine topology is locally connected},
Potential Anal. {\bf 25} (2006), no. 3, 283--288.

\bibitem{EW1} El Marzguioui, S., Wiegerinck, J.: \textit{Connectedness in the plurifine
topology}, Functional Analysis and Complex Analysis, Istanbul 2007, 105-115, Contemp.
Math. {\bf 481}, Amer. Math. Soc. Providence, RI, 2009.

\bibitem{EW2} El Marzguioui, S., Wiegerinck, J.: \textit{Continuity
properties of finely plurisubharmonic functions},
Indiana Univ. Math. J., {\bf 59} (2010), no 5, 1793--1800.


\bibitem{F1} Fuglede, B.: \textit{Finely harmonic functions}, Springer Lecture Notes
in Mathematics, {\bf 289}, Berlin-Heidelberg-New York, 1972.

\bibitem{F3}  Fuglede, B.: \textit{Fonctions harmoniques et fonctions
finement harmoniques},
Ann. Inst. Fourier. Grenoble, tome 24, n° 4 (1974),  77--91.


\bibitem{F4}Fuglede B.: \textit{Integral representation of fine Potential}, Math. Ann., 262 (1983).

\bibitem{F5} Fuglede B.: \textit{Repr\'esentation integrale des
potentiels fins}, Comptes Rendus, 300, S\'erie I, 129 -132 (1985).

\bibitem{F6} Fuglede, B.: \textit{Fonctions finement holomorphes de plusieurs
variables -- un essai}, S\'eminaire d'Analyse
P. Lelong--P. Dolbeault--H. Skoda, 1983/85,  133--145, Lecture 
Notes in Math. \textbf{1198}, Springer, Berlin, 1986. 



\bibitem{HH} Hai, L.M., Hong, N.X.:\textit{ Maximal $q$-plurisubharmonic functions in $\CC^n$.} Results in Math. 63(1–2), 63–77
(2013).

\bibitem{Ho} Hong, N. X.,  Viet, H.: \textit{Local property of maximal plurifinely plurisubharmonic functions.}
J. Math. Anal. Appl. 441 (2016), no. 2, 586--592.

\bibitem{Ho2} Hong, N. X.,  Viet, H.: \textit{Local maximality for bounded plurifinely plurisubharmonic functions.}
To appear in Potential Anal.


\bibitem{W}  Wiegerinck, J.: \textit{Plurifine potential theory.} Ann. Polon. Math. 106, 275–-292 (2012).

\end{document}